\documentclass[preprint,nonatbib]{elsarticle}

\usepackage{lineno}
\makeatletter
\let\c@author\relax
\makeatother

\usepackage{geometry} 
\usepackage{comment}
\usepackage{hhline}
\usepackage{algorithm}
\usepackage{algorithmic}
\usepackage[ngerman,english]{babel}
\usepackage[T1]{fontenc}
\usepackage[utf8]{inputenc}
\usepackage[table,xcdraw]{xcolor}
\usepackage{graphicx} 
\usepackage{floatflt} 
\usepackage{amsmath}  
\usepackage{amssymb}  
\usepackage{amsthm}   
\usepackage{paralist} 
\usepackage{color}	
\usepackage{bbm} 
\usepackage{tikz} 
\usepackage{tikz-cd} 
\usepackage{array}
\usepackage{pgfplots} 
\usepackage[section]{placeins} 
\usepackage{subcaption}
\usepackage{caption}
\usepackage[official]{eurosym}
\usepackage{multirow}
\usepackage{float}
\usepackage{verbatim}
\usepgfplotslibrary{units}
\usepackage{units}
\usepackage{mathtools}
\usetikzlibrary{arrows,shapes,positioning,matrix}
\usepackage{smartdiagram}
\usepackage{booktabs}
\usepackage[b]{esvect}
\usepackage[colorlinks=false, pdfborder={0 0 0}]{hyperref}
\usepackage[nameinlink,noabbrev]{cleveref}

\usepackage{tikz-network}
\usetikzlibrary{positioning}
\usepackage[flushleft]{threeparttable}
\usepackage{pifont}
\usepackage{setspace}
\usepackage{bm}
\usepackage{mathrsfs}
\usepackage[normalem]{ulem}

\newtheoremstyle{remarkitalic}% name
  {3pt}% Space above
  {3pt}% Space below
  {\itshape}% Body font      <-- normally upright; we change it to \itshape
  {}% Indent amount
  {\itshape}% Theorem head font
  {.}% Punctuation after theorem head
  { }% Space after theorem head
  {}% Theorem head spec

\newtheorem{theorem}{Theorem}[subsection]

\newtheorem{corollary}[theorem]{Corollary}
\newtheorem{lemma}[theorem]{Lemma}
\newtheorem{proposition}[theorem]{Proposition}

\theoremstyle{definition}
\newtheorem{definition}[theorem]{Definition}

\theoremstyle{remarkitalic}
\newtheorem{remark}[theorem]{Remark}

\usepackage[babel,german=quotes]{csquotes}
\usepackage[style=numeric,sorting=none,backend=biber]{biblatex}
\bibliography{Literature}
\renewbibmacro{in:}{}
\renewbibmacro*{volume+number+eid}{%
  \printfield{volume}
  \addnbthinspace
  \printfield{number}%
  \printfield{eid}}
\DeclareFieldFormat[article]{number}{\mkbibparens{#1}}

 \numberwithin{equation}{section}
 
\makeatletter
\newcases{mycases}{~}{%
  \hfil$\m@th\displaystyle{##}$}{$\m@th\displaystyle{##}$\hfil}{\lbrace}{.}
\makeatother

\makeatletter
\AtBeginDocument{%
  \expandafter\renewcommand\expandafter\subsection\expandafter{%
    \expandafter\@fb@secFB\subsection
  }%
}
\makeatother

\makeatletter

\renewcommand\subsection{\@startsection{subsection}{2}{\z@}%
  {-3.25ex\@plus -1ex \@minus -.2ex}%
  {1.5ex \@plus .2ex}%
  {\normalfont\bfseries}}

\renewcommand\subsubsection{\@startsection{subsubsection}{3}{\z@}%
  {-3.0ex\@plus -1ex \@minus -.2ex}%
  {1.25ex \@plus .2ex}%
  {\normalfont\bfseries}}

\renewcommand\paragraph{\@startsection{paragraph}{4}{\z@}%
  {1.5ex \@plus .2ex \@minus .2ex}%
  {-1em}%
  {\normalfont\bfseries}}

\makeatother

\allowdisplaybreaks

\usepackage{fancyhdr}

\begin{document}

\makeatletter
\def\ps@pprintTitle{%
  \let\@oddfoot\@empty
  \let\@evenfoot\@empty
}
\makeatother

\fancypagestyle{plain}{
  \fancyhf{}
  \fancyfoot[C]{\thepage}
  %\fancyhead[C]{\textcolor{gray}{\footnotesize Preprint submitted to Simulation Modelling Practice and Theory}}
  \renewcommand{\headrulewidth}{0pt}
  \renewcommand{\footrulewidth}{0pt}
}

\fancypagestyle{firstpage}{
  \fancyhf{}
  %\fancyhead[C]{\textcolor{gray}{\footnotesize Preprint submitted to Insurance: Mathematics and Economics}}
  \renewcommand{\headrulewidth}{0pt}
}
	
\setcounter{page}{1}

\title{
Utility-Based Recovery Risk Measures:\\ Redundancy and Approximation for Portfolio Optimization
}

\author[1]{Justin Plückebaum%\corref{cor1}
}
\ead{justin.plueckebaum@insurance.uni-hannover.de}
\author[1]{Stefan Weber}
\ead{stefan.weber@insurance.uni-hannover.de}
%\cortext[cor1]{Corresponding author}

\address[1]{Leibniz Universität Hannover, Institute of Actuarial and Financial Mathematics,\\ Welfengarten 1, 30167 Hannover, Germany\\ \text{}\\
\normalsize {October 09, 2026}}

\date{\today}

\begin{abstract}
 Recovery risk measures incorporate explicit liability-recovery requirements into asset--liability risk assessment. We study their specialization to optimized certainty equivalents (OCEs), allowing the underlying utility specification to vary across recovery levels and encompassing Recovery Average Value at Risk (RecAV@R) as a special case. For finite recovery specifications, we use the dual penalty representation to characterize redundant recovery regimes and derive position-independent screening conditions. For a subfamily of the Cressie--Read power-divergence family, including entropic and truncated-quadratic specifications, we obtain an exact characterization on atomless probability spaces: universal redundancy is equivalent to a two-dimensional convex-hull condition involving recovery levels and divergence-scale parameters. On such spaces, universal redundancy for RecAV@R reduces to pairwise dominance, whereas for power-divergence OCEs it can arise from the joint effect of several recovery regimes. For nonlinear OCE utilities, we develop a certified piecewise-linear approximation framework for portfolio optimization. In finite-scenario models, the approximating problems are finite-dimensional linear programs, while utility approximation errors transfer directly to the Recovery OCE, yielding explicit error bounds and convergence rates. Uniform approximation over the portfolio domain further implies convergence of optimal values and stability of optimal solutions. A numerical asset--liability study illustrates the effects of recovery-dependent risk preferences and the accuracy and convergence of the approximation.
\end{abstract}

\begin{keyword}
Recovery risk measures \sep Optimized certainty equivalents \sep
Convex risk measures \sep Portfolio optimization \sep
Piecewise-linear approximation
\end{keyword}

\maketitle
\thispagestyle{firstpage}
\pagestyle{plain}
%\linenumbers

\section{Introduction}
Risk measures for asset–liability positions are commonly formulated in terms of the surplus remaining after liabilities have been met in full. While this perspective captures the risk of failing to achieve full solvency, it does not distinguish between different degrees of liability recovery in shortfall states. Recovery risk measures extend this perspective by imposing risk requirements at multiple recovery levels, assessing the position relative to different fractions of the liability rather than full liability coverage alone \cite{paperMunari}. They thereby account not only for the possibility of a shortfall, but also for its severity in terms of the fraction of liabilities that can still be covered. This distinction is particularly relevant in asset–liability management, where different degrees of underfunding may have substantially different economic consequences.

Within the recovery-risk framework, Recovery Average Value at Risk (RecAV@R) provides a particularly tractable specification and has been studied in both risk measurement and portfolio optimization \cite{paperMunari,paperPortfolioOptRecAV@R}. Its reliance on AV@R, however, ties the assessment at each recovery level to a specific tail-risk criterion. Optimized certainty equivalents (OCEs), introduced by Ben-Tal and Teboulle \cite{ben1986expected,paperBenTal}, provide a broader utility-based class of convex risk measures that includes AV@R as a special case and encodes risk preferences through the generating utility function. This suggests replacing the AV@R criterion at each recovery level by an OCE, with the generating utility allowed to depend on the recovery level. The resulting Recovery OCE framework combines recovery-sensitive risk assessment with the flexibility of OCEs, while retaining RecAV@R as a special case. This generalization, however, introduces both structural and computational challenges. When multiple recovery levels and risk specifications are imposed, it becomes important to determine which of these requirements effectively contribute to the aggregate risk measure and which are redundant. Computationally, while RecAV@R admits linear programming formulations in finite-scenario portfolio models, this tractability is generally lost for nonlinear OCE utilities, calling for suitable approximation schemes with explicit error control.

Our main theoretical contribution concerns the redundancy structure of finite Recovery OCE specifications. Building on general duality results for maxima of convex risk measures, we characterize the dual structure of finite Recovery OCEs in terms of the regime-wise penalty functions and the convexification arising from their combination. This yields general redundancy criteria and sufficient position-independent screening conditions. For the power-divergence subfamily, including entropic and truncated-quadratic specifications, we show that these screening conditions admit an exact geometric characterization on atomless probability spaces: universal redundancy is equivalent to a two-dimensional convex-hull condition involving recovery levels and divergence-scale parameters. On such spaces, we further show that universal redundancy for RecAV@R reduces to pairwise dominance. The power-divergence result therefore reveals a genuinely different phenomenon: a recovery regime may be redundant through the joint effect of several other regimes even when it is not dominated by any single one. We complement this structural analysis with a certified approximation framework for Recovery OCE portfolio optimization. Piecewise-linear approximations of the underlying OCE utilities yield finite-dimensional linear programming formulations in finite-scenario models. By localizing the scalar OCE optimizations to compact residual intervals, we obtain explicit approximation-error bounds and convergence rates under corresponding regularity assumptions. Uniform approximation over the portfolio domain further yields convergence of optimal values and stability of optimal portfolio solutions, with convergence of optimizers when the original problem has a unique optimizer.

The remainder of the paper is organized as follows. Section~2 develops the Recovery OCE framework and finite recovery specifications. Section~3 develops the redundancy analysis, from its dual characterization and general screening criteria to the exact geometric characterization for power-divergence OCEs. Section~4 develops the approximation framework, its linear programming formulation, and the corresponding approximation and stability results. Section~5 presents a numerical asset--liability study. Additional proofs are collected in the appendix.

\subsection{Related Literature}
Our work is most directly related to the literature on recovery risk measures. Munari et al. \cite{paperMunari} introduce recovery risk measures as a framework for incorporating explicit requirements on liability recovery into risk measurement and study Recovery Value at Risk and Recovery Average Value at Risk  as prominent specifications. Building on this framework, Munari, Plückebaum, and Weber \cite{paperPortfolioOptRecAV@R} study mean-risk portfolio selection under RecAV@R and derive tractable linear programming formulations for the nominal problem as well as under mixture and box uncertainty. Recovery OCEs, obtained by specifying OCE risk measures within the general class of recovery risk measures, were also considered by Duong \cite{Duong2025} in her master’s thesis, which provides an initial analysis of this particular class of recovery risk measures.  The present paper develops this line further by developing a redundancy theory for Recovery OCE regimes and establishing certified approximation and stability results for portfolio optimization.

A second strand of related work concerns convex risk measures and optimized certainty equivalents. Building on the axiomatic theory of coherent risk measures introduced by Artzner et al. \cite{artzner1999coherent}, the theory of convex risk measures and their dual representations in terms of penalty functions was developed by Frittelli and Rosazza Gianin \cite{frittelli2002putting} and Föllmer and Schied \cite{follmer2002convex}; see also Föllmer and Schied \cite{follmer2016stochastic} for a comprehensive treatment and Ruszczyński and Shapiro \cite{ruszczynski2006optimization} for an optimization perspective. A prominent utility-based class within this framework is provided by optimized certainty equivalents. Their underlying certainty-equivalent functional was introduced by Ben-Tal and Teboulle \cite{ben1986expected} and subsequently developed by the same authors as a class of convex risk measures \cite{paperBenTal}, including a dual representation in terms of convex divergence functionals. More recent work has developed extensions of optimized certainty equivalents in several directions, including robust formulations, preference uncertainty, stochastic control, and rank-dependent utility \cite{bartl2020computational,wu2022preference,veraguas2022stochastic,wu2024generalized}.

Particularly relevant to our dual analysis is the literature on combinations of convex risk measures. Föllmer and Schied \cite{follmer2002convex} show that the supremum of a family of convex risk measures admits a dual representation using the pointwise infimum of their penalty functions, while Righi \cite{righi2023theory} develops a broader theory of combinations of risk measures that includes worst-case combinations. These general results provide the duality basis for our analysis. We specialize them to finite Recovery OCEs and exploit the specific structure of the resulting OCE penalties to identify the minimal penalty, analyze when convexification is strict, characterize redundant recovery regimes, and derive position-independent screening criteria for redundancy.

A further strand of related literature concerns approximation methods for optimization problems involving nonlinear functions. Piecewise-linear approximation is a classical tool in approximation theory; see, for example, \cite{de1978practical} and \cite{powell1981approximation} for general treatments of piecewise-polynomial approximation. Approximation techniques are also well established in stochastic programming; see \cite{birge2011}.  Within utility-based optimization, piecewise-linear representations have been used to obtain tractable formulations of expected-utility and risk-based problems. In particular, Natarajan et al. \cite{natarajan2010tractable} develop robust expected-utility and risk models for portfolio optimization using piecewise-linear utility functions. More recently, piecewise-linear approximation has also been used in preference-robust optimization. Wu and Xu \cite{wu2022preference} study preference-robust modified OCEs and provide tractable approximations together with approximation-error bounds, while related approaches to optimization under utility ambiguity are developed by Guo et al. \cite{guo2024utility} and Wu et al. \cite{wu2025bi}. In contrast to the preference-robust literature, the OCE utilities in our setting are specified rather than ambiguous. We use piecewise-linear approximation to obtain tractable formulations for portfolio optimization with nonlinear Recovery OCEs and establish how utility-level approximation errors propagate to the Recovery OCE and the associated optimization problem, including convergence of optimal values and stability of optimal solutions.

\section{Utility-Based Recovery Risk Measures}
Optimized certainty equivalents provide a flexible utility-based framework for constructing convex monetary risk measures and encompass several standard risk measures, including Average Value at Risk, as special cases \cite{paperBenTal}. Combining the OCE framework with the general framework of recovery risk measures \cite{paperMunari}, we consider recovery-adjusted positions evaluated by OCE risk measures whose utility functions may vary across recovery levels. The resulting class of Recovery OCE (RecOCE) risk measures contains Recovery AV@R as a special case while allowing for more general recovery-dependent risk preferences. This section first recalls the OCE framework and then develops the properties of RecOCE risk measures needed for the subsequent analysis.

\subsection{Optimized Certainty Equivalent Risk Measures}
\label{subsec:oce}

Let $(\Omega,\mathcal F,\mathbb P)$ be a probability space and let \(L^\infty:=L^\infty(\Omega,\mathcal F,\mathbb P)\) denote the space of essentially bounded financial positions. Following Ben-Tal and Teboulle \cite{paperBenTal}, let \(\mathcal U_0\) denote  the class of closed, concave, non-decreasing functions $u:\mathbb R\to[-\infty,\infty)$ with nonempty effective domain satisfying
\[
    u(0)=0
    \qquad\text{and}\qquad
    1\in\partial u(0),
\]
where $\partial u$ denotes the superdifferential of $u$. By concavity and the normalization $u(0)=0$, the supergradient condition yields $u(x) \leq x$ for all $x\in\mathbb R$.

\begin{definition}
\label{def:oce}
Let \(u\in\mathcal U_0\). The \emph{optimized certainty equivalent}
associated with \(u\) is defined by
\[
    S_u(X)
    :=
    \sup_{m\in\mathbb R}
    \left\{
        m+\mathbb E[u(X-m)]
    \right\},
    \qquad X\in L^\infty.
\]
\end{definition}
The OCE evaluates a risky position \(X\) by optimally decomposing it into a deterministic cash amount \(m\) and a residual position \(X-m\). The cash amount is valued linearly, whereas the residual position is evaluated through the utility function \(u\).

The following lemma provides a sufficient condition under which the scalar optimization problem defining the OCE can be localized to the essential range of the position.
\begin{lemma}
\label{lemma:oce-sup-attained}
Let \(u\in\mathcal U_0\) and \(X\in L^\infty\), and set
\[
    x_{\min}:=\operatorname{ess\,inf}X,
    \qquad
    x_{\max}:=\operatorname{ess\,sup}X.
\]
If \(u\) is finite on $[x_{\min}-x_{\max},\,x_{\max}-x_{\min}]$, then the supremum defining \(S_u(X)\) is attained, and
\[
    S_u(X)
    =
    \max_{m\in[x_{\min},x_{\max}]}
    \left\{
        m+\mathbb E[u(X-m)]
    \right\}.
\]
\end{lemma}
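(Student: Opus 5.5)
Set $\phi(m):=m+\mathbb E[u(X-m)]$ for $m\in\mathbb R$, with values in $[-\infty,\infty)$. The plan has three steps: show that $\phi$ is maximized over $\mathbb R$ only by points of $[x_{\min},x_{\max}]$, show that $\phi$ is finite and continuous on that interval, and conclude by compactness. Two elementary consequences of $u\in\mathcal U_0$ will be used throughout. First, $u(x)\le x$ for all $x$, as noted before Definition~\ref{def:oce}. Second, $u$ is non-decreasing.

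\textbf{Step 1: reduction to the interval.}
For $m\in[x_{\min},x_{\max}]$ we have $X-m\in[x_{\min}-x_{\max},\,x_{\max}-x_{\min}]$ almost surely. Since $u$ is concave and finite on this compact interval, it is bounded there (continuity on the interior plus monotonicity at the endpoints), so $\phi(m)$ is finite.

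Now take $m<x_{\min}$. Then $X-m\ge x_{\min}-m>0$, and the supergradient inequality only gives $u(X-m)\le X-m$, which is an upper bound and is the wrong direction for showing that $\phi(m)$ is not too large. Instead I write
\[
\phi(m)=x_{\min}+\mathbb E\bigl[u(X-m)-(x_{\min}-m)\bigr]
\]
and compare with $\phi(x_{\min})=x_{\min}+\mathbb E[u(X-x_{\min})]$. Set $y:=X-x_{\min}\ge0$ and $h:=x_{\min}-m>0$. It suffices to show $u(y+h)-h\le u(y)$. By concavity, $u(y+h)-u(y)\le u(h)-u(0)=u(h)\le h$, since increments of a concave function over an interval of fixed length are non-increasing and $y\ge0$. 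Hence $\phi(m)\le\phi(x_{\min})$.

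Now take $m>x_{\max}$. Set $y:=X-x_{\max}\le0$ and $h:=m-x_{\max}>0$. We need $h+u(y-h)\le u(y)$, that is, $u(y)-u(y-h)\ge h$. By concavity and $y\le0$, $u(y)-u(y-h)\ge u(0)-u(-h)$. From $u(-h)\le -h$ we get $u(0)-u(-h)\ge h$. If $u(y-h)=-\infty$ the inequality is trivial. So $\phi(m)\le\phi(x_{\max})$.

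Therefore $S_u(X)=\sup_{m\in[x_{\min},x_{\max}]}\phi(m)$.

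\textbf{Step 2: continuity on the interval.}
It remains to show that the supremum over $[x_{\min},x_{\max}]$ is attained.
\begin{itemize}
\item The map $\phi$ is concave in $m$, because $u$ is concave, and it is finite on the interval, hence continuous on the interior.
\item At the endpoints I would argue upper semicontinuity, which suffices for a maximum over a compact set. Since $u$ is closed, it is upper semicontinuous. Moreover, for $m$ in the interval, $u(X-m)$ is bounded above by $x_{\max}-x_{\min}$ uniformly. Reverse Fatou then gives $\limsup_{m'\to m}\mathbb E[u(X-m')]\le\mathbb E[u(X-m)]$.
\item Alternatively, $u$ is bounded on the compact interval, so dominated convergence gives full continuity if $u$ is continuous there. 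A closed concave function that is finite on a closed interval is continuous on it, so this route also works.
\end{itemize}
An upper semicontinuous function on a compact interval attains its maximum, and combined with Step 1 this maximum equals $S_u(X)$.

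The main obstacle I expect is the endpoint behaviour in Step 2. Concave functions can fail to be continuous at the boundary of their domain, and closedness of $u$ is exactly what rules this out, so I would handle this step carefully through upper semicontinuity.
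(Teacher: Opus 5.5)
Your proof is correct and follows essentially the same route as the paper. Your concave-increment inequalities $u(y+h)-u(y)\le h$ for $y\ge0$ and $u(y)-u(y-h)\ge h$ for $y\le0$ say exactly that $v(x):=u(x)-x$, which is concave with maximum $v(0)=0$, is non-increasing on $[0,\infty)$ and non-decreasing on $(-\infty,0]$; this is how the paper reduces to $[x_{\min},x_{\max}]$, and your continuity route for attainment matches its argument via uniform continuity of the closed concave $u$ on the compact residual interval, with your reverse-Fatou upper-semicontinuity argument an equally valid alternative.
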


\begin{proof}
The proof is given in \Cref{app:oce-sup-attained}.
\end{proof}

In particular, the optimization may be restricted to the essential range of \(X\), in which case
\[
X-m \in [x_{\min}-x_{\max},\,x_{\max}-x_{\min}]
\qquad \mathbb P\text{-a.s.}
\] 
Thus, the subsequent approximation analysis requires approximating $u$ only on this compact residual interval.

By Ben-Tal and Teboulle \cite[Theorem~2.1]{paperBenTal}, the OCE \(S_u\) is translation invariant, monotone, and concave. Hence, whenever \(S_u\) is finite-valued on \(L^\infty\),
\[
    \rho_u(X):=-S_u(X), \qquad X\in L^\infty,
\]
defines a convex monetary risk measure, which we call the utility-based risk measure generated by \(u\). The OCE framework thus provides a flexible class of convex monetary risk measures through the choice of the underlying utility function. Two important examples are the entropic risk measure and Average Value at Risk.

\begin{remark}
\label{ex:standard-oce-risk-measures} \text{ }
\begin{enumerate}

    \item \textbf{\emph{Entropic risk measure.}}
    For \(\theta>0\), the exponential utility
    \(u_\theta(x)=\theta(1-e^{-x/\theta})\) induces
    \[
        \rho_{u_\theta}(X)
        =
        \theta\log\mathbb E\!\left[e^{-X/\theta}\right],
    \]
    the entropic risk measure with absolute risk-aversion parameter
    \(1/\theta\).

    \item \textbf{\emph{Average Value at Risk.}}
    Let \(\alpha\in(0,1)\) and consider the piecewise-linear utility function
    \[
        u_\alpha(x)
        =
        \begin{cases}
            x/\alpha, & x\leq 0,\\
            0,        & x>0.
        \end{cases}
    \]
    The associated utility-based risk measure is
    \[
        \rho_{u_\alpha}(X)
        =
        \inf_{m\in\mathbb R}
        \left\{
            \frac{1}{\alpha}\mathbb E[(m-X)^+]-m
        \right\}
        =
        \operatorname{AV@R}_\alpha(X),
    \]
    where the last equality follows from the Rockafellar--Uryasev representation \cite{rockafeller2000,rockafeller2002}. Here, $\alpha$ denotes the tail probability, so that $\operatorname{AV@R}_\alpha$ corresponds to the confidence level $1-\alpha$, with X interpreted as a financial position rather than as a loss variable. 
\end{enumerate}
\end{remark}

\subsection{Recovery OCE Risk Measures}
We specialize the recovery risk measure framework of Munari et al. \cite{paperMunari} to OCE risk measures by allowing the generating utility function to vary across recovery levels.

Let $\mathcal A\subseteq\mathbb R$ be a parameter space and consider a family $\mathfrak u=(u_\alpha)_{\alpha\in\mathcal A}$ of utility functions in $\mathcal U_0$ such that the associated OCEs are finite-valued on $L^\infty$. We assume that $\mathfrak u$ is pointwise nondecreasing in $\alpha$, that is,
\[
    \alpha_1\leq\alpha_2
    \quad\Longrightarrow\quad
    u_{\alpha_1}(x)\leq u_{\alpha_2}(x)
    \qquad\text{for all }x\in\mathbb R.
\]
Since the OCE is monotone with respect to the generating utility function, this ordering is reversed for the associated risk measures:
\[
    \alpha_1\leq\alpha_2
    \quad\Longrightarrow\quad
    \rho_{u_{\alpha_1}}(X)\geq\rho_{u_{\alpha_2}}(X)
    \qquad\text{for all }X\in L^\infty.
\]

\begin{definition}
\label{def:recoce} Let $\gamma:[0,1]\to\mathcal A$ be a nondecreasing level function. \\
The \emph{Recovery OCE (RecOCE) risk measure} associated with $\mathfrak u$ and $\gamma$ is defined by
\[
    \operatorname{Rec}\rho_{\mathfrak u,\gamma}(X,Y)
    :=
    \sup_{\lambda\in[0,1]}
    \rho_{u_{\gamma(\lambda)}}
    \bigl(X+(1-\lambda)Y\bigr),
    \qquad (X,Y)\in L^\infty\times L^\infty.
\]
\end{definition}

\begin{remark}[Recovery AV@R]
\label{ex:recavar-as-recoce}
Consider the AV@R utilities \(u_\alpha\), \(\alpha\in(0,1)\), from \Cref{ex:standard-oce-risk-measures}. The family \(\mathfrak u=(u_\alpha)_{\alpha\in(0,1)}\) is pointwise nondecreasing in \(\alpha\), and \(\rho_{u_\alpha}=\operatorname{AV@R}_\alpha\). Hence, for any nondecreasing level function \(\gamma:[0,1]\to(0,1)\),
\[
\begin{aligned}
    \operatorname{Rec}\rho_{\mathfrak u,\gamma}(X,Y)
    &=
    \sup_{\lambda\in[0,1]}
    \operatorname{AV@R}_{\gamma(\lambda)}
    \bigl(X+(1-\lambda)Y\bigr) \\
    &=
    \operatorname{RecAV@R}_{\gamma}(X,Y).
\end{aligned}
\]
Thus, the established Recovery AV@R risk measure \cite{paperMunari,paperPortfolioOptRecAV@R} is recovered as a special case of the RecOCE framework.\\
\end{remark}

\begin{remark}
Three aspects of the definition are worth emphasizing.
\begin{itemize}
    \item[(i)] The level function \(\gamma\) determines how the parameter indexing the family of OCE risk measures varies across recovery levels. The term ``level'' is used in a generalized sense: depending on the underlying utility family, the parameter may represent a tail level, a risk-tolerance parameter, or another preference parameter. Since both \(\gamma\) and the utility family are nondecreasing, lower recovery levels are associated with weakly more conservative risk specifications.

    \item[(ii)] The two arguments admit a natural asset--liability interpretation.  For assets \(A\in L^\infty\) and liabilities \(L\in L^\infty_+\), setting \(X=A-L\) and \(Y=L\) gives
    \[
        X+(1-\lambda)Y=A-\lambda L.
    \]
    Hence, \(\lambda\) represents the fraction of liabilities required to be covered, and the RecOCE evaluates the corresponding recovery-adjusted position using the OCE risk specification selected by
    \(\gamma(\lambda)\).

    \item[(iii)] Under this asset--liability interpretation, the acceptance condition can be written as
    \[
    \operatorname{Rec}\rho_{\mathfrak u,\gamma}(A-L,L)\leq0
    \quad\Longleftrightarrow\quad
    \rho_{u_{\gamma(\lambda)}}(A-\lambda L)\leq0
    \quad\text{for all }\lambda\in[0,1].
    \]
    Thus, acceptability is required simultaneously across all recovery-adjusted positions. In the RecAV@R case, these conditions imply the recovery-probability bounds
    \[
        \mathbb P(A\geq\lambda L)\geq1-\gamma(\lambda),
        \qquad \lambda\in[0,1],
    \]
    as shown by Munari et al. \cite{paperMunari}. General OCE risk measures need not admit an analogous probability-based interpretation; instead, their acceptance conditions are determined by the utility functions associated with the respective recovery levels. Recovery OCEs therefore trade the direct probability-based interpretation of RecAV@R for greater flexibility in specifying recovery-dependent risk preferences.
\end{itemize}
\end{remark}

The basic structural properties of Recovery OCE risk measures are collected in the following proposition.

\begin{proposition}
\label{prop:properties-recrho}
The Recovery OCE risk measure
\(\operatorname{Rec}\rho_{\mathfrak u,\gamma}\) is cash invariant in its
first component, monotone decreasing in both arguments and jointly convex in \((X,Y)\). Consequently, for each fixed \(Y\in L^\infty\), the mapping \(X\mapsto\operatorname{Rec}\rho_{\mathfrak u,\gamma}(X,Y)\) is a convex monetary risk measure.
If \(Y\in L^\infty_+\), then
\[
    \operatorname{Rec}\rho_{\mathfrak u,\gamma}(0,Y)=0,
    \qquad
    \operatorname{Rec}\rho_{\mathfrak u,\gamma}(X,Y)
    \leq \rho_{u_{\gamma(0)}}(X),
    \qquad X\in L^\infty.
\]
\end{proposition}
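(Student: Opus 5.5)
The plan is to reduce every claimed property to the corresponding property of the individual OCE risk measures \(\rho_{u_{\gamma(\lambda)}}\), \(\lambda\in[0,1]\), and then pass these through the supremum over \(\lambda\). By Ben-Tal and Teboulle \cite[Theorem~2.1]{paperBenTal}, each \(\rho_{u_\alpha}\) is a finite-valued convex monetary risk measure on \(L^\infty\). A supremum of functions preserves monotonicity, convexity and cash additivity. So the main tasks are: verify that each composed map \((X,Y)\mapsto\rho_{u_{\gamma(\lambda)}}(X+(1-\lambda)Y)\) has the required property, and check that the supremum is finite.

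\textbf{Cash invariance in \(X\).} For \(m\in\mathbb R\), each term at \(X+m\) equals \(\rho_{u_{\gamma(\lambda)}}(X+(1-\lambda)Y)-m\). Taking the supremum gives \(\operatorname{Rec}\rho_{\mathfrak u,\gamma}(X+m,Y)=\operatorname{Rec}\rho_{\mathfrak u,\gamma}(X,Y)-m\).

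\textbf{Monotonicity.} If \(X\le X'\) and \(Y\le Y'\), then \(X+(1-\lambda)Y\le X'+(1-\lambda)Y'\), since \(1-\lambda\ge0\). Hence every term is monotone decreasing, and so is the supremum.

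\textbf{Joint convexity.} For fixed \(\lambda\), the map \((X,Y)\mapsto X+(1-\lambda)Y\) is linear. Composing the convex \(\rho_{u_{\gamma(\lambda)}}\) with it gives a jointly convex function, and the pointwise supremum of convex functions is convex.

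\textbf{Finiteness.} This is the only point needing care. Every term is at least \(-\infty\), so I need an upper bound uniform in \(\lambda\). Monotonicity and cash invariance give
\[
\rho_{u_{\gamma(\lambda)}}(Z)\le \rho_{u_{\gamma(\lambda)}}(\operatorname{ess\,inf}Z)=-\operatorname{ess\,inf}Z,
\]
using \(\rho_u(0)=-S_u(0)=0\). Indeed, \(S_u(0)=\sup_m\{m+u(-m)\}=0\), because \(u(x)\le x\) and \(m=0\) attains the value \(0\). Since \(\|X+(1-\lambda)Y\|_\infty\le\|X\|_\infty+\|Y\|_\infty\), the supremum is at most \(\|X\|_\infty+\|Y\|_\infty<\infty\). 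Together with cash invariance, monotonicity and convexity, this shows that \(X\mapsto\operatorname{Rec}\rho_{\mathfrak u,\gamma}(X,Y)\) is a convex monetary risk measure for each fixed \(Y\).

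\textbf{The case \(Y\in L^\infty_+\).} For \(\operatorname{Rec}\rho_{\mathfrak u,\gamma}(0,Y)=0\), the term at \(\lambda=1\) is \(\rho_{u_{\gamma(1)}}(0)=0\), so the supremum is at least \(0\). For \(\lambda\in[0,1]\), we have \((1-\lambda)Y\ge0\), so by monotonicity each term is at most \(\rho_{u_{\gamma(\lambda)}}(0)=0\).

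For the upper bound \(\operatorname{Rec}\rho_{\mathfrak u,\gamma}(X,Y)\le\rho_{u_{\gamma(0)}}(X)\), take \(\lambda\in[0,1]\). Monotonicity in the position and \(Y\ge0\) give \(\rho_{u_{\gamma(\lambda)}}(X+(1-\lambda)Y)\le\rho_{u_{\gamma(\lambda)}}(X)\). Since \(\gamma\) is nondecreasing, \(\gamma(\lambda)\ge\gamma(0)\), so \(u_{\gamma(\lambda)}\ge u_{\gamma(0)}\) pointwise. The reversed ordering of the risk measures stated before \Cref{def:recoce} then yields \(\rho_{u_{\gamma(\lambda)}}(X)\le\rho_{u_{\gamma(0)}}(X)\). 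Taking the supremum over \(\lambda\) gives the claim.

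The only mildly delicate step is the uniform finiteness bound. Everything else is routine stability of these properties under suprema.
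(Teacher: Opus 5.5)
Your proof is correct and matches the paper's argument. Cash invariance, monotonicity and joint convexity are inherited termwise from the OCEs and pass through the supremum over \(\lambda\). The claims for \(Y\in L^\infty_+\) follow from monotonicity, normalization at \(\lambda=1\), and the ordering induced by the nondecreasing \(\gamma\). Your explicit uniform bound \(\rho_{u_{\gamma(\lambda)}}(X+(1-\lambda)Y)\le\|X\|_\infty+\|Y\|_\infty\) guarantees finiteness of the supremum; the paper leaves this implicit, and it is a worthwhile addition.
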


\begin{proof}
See \Cref{app:proof-properties-recrho}.
\end{proof}

\subsection{Finite Recovery Specifications}

Rather than specifying a risk preference for every recovery level \(\lambda\in[0,1]\), a decision maker may distinguish only finitely many economically relevant recovery regimes. We formalize such requirements through a finite recovery specification.

Let \(\mathfrak u=(u_\alpha)_{\alpha\in\mathcal A}\) be a pointwise nondecreasing utility family as in \Cref{def:recoce}. Consider recovery levels
\[
    0=r_0<r_1<\cdots<r_l<r_{l+1}=1
\]
and parameters
\[
    \alpha_1\leq\alpha_2\leq\cdots\leq\alpha_{l+1},
    \qquad \alpha_j\in\mathcal A.
\]
The associated finite recovery specification is represented by the stepwise level function
\begin{align}\label{eq:finiteRecoverySpecification}
    \gamma(\lambda)
    =
    \begin{cases}
        \alpha_1, & r_0\leq\lambda\leq r_1,\\
        \alpha_2, & r_1<\lambda\leq r_2,\\
        \vdots\\
        \alpha_{l+1}, & r_l<\lambda\leq r_{l+1}.
    \end{cases}
\end{align}
The thresholds \(r_j\) delimit recovery regimes with distinct risk specifications, while \(\alpha_j\) determines the OCE risk measure associated with the \(j\)-th regime. As in the finite-recovery reduction of \cite{paperMunari}, monotonicity yields an exact finite representation for RecOCE.

\begin{proposition}[Finite recovery representation]
\label{prop:finite-recovery-representation}
Let \(\gamma\) be given by
\eqref{eq:finiteRecoverySpecification} and let
\(Y\in L^\infty_+\). Then, for every \(X\in L^\infty\),
\[
    \operatorname{Rec}\rho_{\mathfrak u,\gamma}(X,Y)
    =
    \max_{j=1,\ldots,l+1}
    \rho_{u_{\alpha_j}}
    \bigl(X+(1-r_j)Y\bigr).
\]
\end{proposition}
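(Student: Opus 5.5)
We split the supremum in \Cref{def:recoce} according to the recovery regimes of the stepwise level function \eqref{eq:finiteRecoverySpecification}. Set $I_1:=[r_0,r_1]$ and $I_j:=(r_{j-1},r_j]$ for $j=2,\ldots,l+1$. These sets partition $[0,1]$, and $\gamma\equiv\alpha_j$ on $I_j$. Hence
\[
    \operatorname{Rec}\rho_{\mathfrak u,\gamma}(X,Y)
    =
    \max_{j=1,\ldots,l+1}\;
    \sup_{\lambda\in I_j}
    \rho_{u_{\alpha_j}}\bigl(X+(1-\lambda)Y\bigr).
\]
It therefore suffices to show, for each fixed $j$, that the inner supremum equals $\rho_{u_{\alpha_j}}(X+(1-r_j)Y)$.

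The key step is monotonicity in $\lambda$. Fix $j$ and $\lambda\in I_j$, so that $\lambda\le r_j$. Since $Y\in L^\infty_+$, we have $(1-\lambda)Y\ge(1-r_j)Y$ almost surely, and therefore
\[
    X+(1-\lambda)Y\ \ge\ X+(1-r_j)Y \quad\text{a.s.}
\]
The OCE risk measure $\rho_{u_{\alpha_j}}$ is monotone decreasing, since $S_u$ is monotone by Ben-Tal and Teboulle. It follows that
\[
    \rho_{u_{\alpha_j}}\bigl(X+(1-\lambda)Y\bigr)\le\rho_{u_{\alpha_j}}\bigl(X+(1-r_j)Y\bigr).
\]
The right endpoint $r_j$ belongs to $I_j$ for every $j$, including $j=1$, where $r_1\in[r_0,r_1]$. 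So the bound is attained at $\lambda=r_j$, and the inner supremum is a maximum equal to $\rho_{u_{\alpha_j}}(X+(1-r_j)Y)$. Substituting this into the display above gives the claimed finite representation.

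There is no genuine obstacle here. The only points needing care are these:
\begin{itemize}
    \item the half-open intervals must be chosen so that the right endpoints lie in their own regimes, which is exactly how $\gamma$ is defined;
    \item nonnegativity of $Y$ is essential for the direction of the pointwise inequality;
    \item finiteness of the OCEs, assumed in \Cref{def:recoce}, ensures that all quantities are real numbers, so the maximum over finitely many $j$ is well defined.
\end{itemize}
Notably, the ordering $\alpha_1\le\cdots\le\alpha_{l+1}$ is not needed for this representation. It only becomes relevant for redundancy questions later.
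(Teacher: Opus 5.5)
Your proposal is correct and follows the paper's own argument: on each regime $\gamma\equiv\alpha_j$, and since $Y\geq0$ and $\rho_{u_{\alpha_j}}$ is monotone decreasing, the map $\lambda\mapsto\rho_{u_{\alpha_j}}\bigl(X+(1-\lambda)Y\bigr)$ is nondecreasing, so the supremum over each regime is attained at the right endpoint $r_j$; the maximum over regimes then gives the representation. You are also right that the ordering $\alpha_1\leq\cdots\leq\alpha_{l+1}$ plays no role in this step.
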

\begin{proof}
Fix \(j\in\{1,\ldots,l+1\}\). On the \(j\)-th recovery regime,
\(\gamma(\lambda)=\alpha_j\). Since \(Y\geq0\), the mapping $\lambda\longmapsto X+(1-\lambda)Y$ is nonincreasing. By monotonicity of \(\rho_{u_{\alpha_j}}\), the mapping $\lambda\longmapsto \rho_{u_{\alpha_j}}\bigl(X+(1-\lambda)Y\bigr)$ is therefore nondecreasing. Hence its supremum over the \(j\)-th regime is attained at the right endpoint \(r_j\). Taking the maximum over all regimes yields the asserted representation.
\end{proof}

\subsection{Portfolio Optimization under Recovery OCE Risk Measures}

We formulate the Recovery OCE portfolio problem in a finite-scenario setting. Let \(b>0\) denote the investor's initial budget and let \(\mathbf R_s\in\mathbb R^K\), \(s=1,\ldots,S\), denote simple-return scenarios with probabilities \(p_s>0\), where \(\sum_{s=1}^S p_s=1\). For portfolio weights \(\mathbf w\in\mathbb R^K\), the terminal asset value in scenario \(s\) is \(A_s(\mathbf w):=b(1+\mathbf w^\top\mathbf R_s)\), and we write \(\mathbf A(\mathbf w):=(A_1(\mathbf w),\ldots,A_S(\mathbf w))\). Further, let \(\mathbf L:=(L_1,\ldots,L_S)\in\mathbb R_+^S\) denote the scenario-dependent liability vector, which is independent of the portfolio choice. Short selling is excluded, so admissible portfolio weights belong to the unit simplex \(\Delta_K:=\{\mathbf w\in\mathbb R_+^K: \sum_{k=1}^K w_k=1\}\).

Following the mean--risk portfolio formulation studied for Recovery AV@R by Munari et al. \cite{paperPortfolioOptRecAV@R}, we minimize the RecOCE risk subject to a lower bound on expected portfolio return. For a target expected return \(\mu\in\mathbb R\), define
\[
    \Delta_K(\mu)
    :=
    \left\{
        \mathbf w\in\Delta_K:
        \sum_{s=1}^S p_s\,\mathbf w^\top\mathbf R_s
        \geq \mu
    \right\}.
\]
As a closed subset of the compact simplex \(\Delta_K\), \(\Delta_K(\mu)\) is compact; throughout, we assume that it is nonempty.

The resulting Recovery OCE portfolio problem is $\inf_{\mathbf w\in\Delta_K(\mu)} \operatorname{Rec}\rho_{\mathfrak u,\gamma} \bigl(\mathbf A(\mathbf w)-\mathbf L,\mathbf L\bigr)$. For the finite recovery specification introduced in the preceding subsection, the exact finite representation yields
\begin{equation}
\label{eq:finiteRecOCEPortfolioProblem}
    \inf_{\mathbf w\in\Delta_K(\mu)}
    \max_{j=1,\ldots,l+1}
    \rho_{u_{\alpha_j}}
    \bigl(\mathbf A(\mathbf w)-r_j\mathbf L\bigr).
\end{equation}

Problem~\eqref{eq:finiteRecOCEPortfolioProblem} provides the reference portfolio formulation for the subsequent analysis. Section~3 studies when individual recovery regimes in the maximum are redundant, while Section~4 develops the approximation framework for the resulting portfolio problem.

\section{Dual Structure and Redundancy of Recovery Regimes}
\label{sec:dual}

We first briefly specialize standard convex duality to finite Recovery OCEs and derive the penalty representation needed for the subsequent analysis. Building on this structure, we characterize redundant recovery regimes and develop position-independent screening conditions, culminating in an exact geometric characterization for power-divergence OCEs.

\subsection{Dual Structure and Convexification Across Recovery Regimes}

Throughout this subsection, we fix \(Y\in L^\infty_+\) and consider the finite recovery specification \eqref{eq:finiteRecoverySpecification}. By \Cref{prop:finite-recovery-representation}, the corresponding Recovery OCE is
\[
\operatorname{Rec}\rho_{\mathfrak u,\gamma}(X,Y)
=
\max_{j=1,\ldots,l+1}
\rho_{u_{\alpha_j}}\bigl(X+(1-r_j)Y\bigr).
\]
We first identify the regime-wise penalty functions and then determine the minimal penalty associated with a finite recovery specification.

Here,  we use the standard duality for convex monetary risk measures on $L^\infty$; see, e.g., \cite{follmer2002convex, follmer2016stochastic}. Let $\mathcal D := \left\{ Z\in L^1_+:\mathbb E[Z]=1 \right\},$ which we identify with the set of probability measures absolutely continuous with respect to $\mathbb P$ through their Radon--Nikodym densities. For a convex monetary risk measure $\rho:L^\infty\to\mathbb R$ with the Fatou property, the minimal penalty function is
\[
\alpha_\rho^{\min}(Z)
:=
\sup_{X\in L^\infty}
\left\{
\mathbb E[-ZX]-\rho(X)
\right\},
\qquad Z\in\mathcal D.
\]
Moreover,
\[
\rho(X)
=
\sup_{Z\in\mathcal D}
\left\{
\mathbb E[-ZX]-\alpha_\rho^{\min}(Z)
\right\},
\qquad X\in L^\infty.
\]

We restrict attention to OCE utilities generated by divergence functions \(\phi\) belonging to the class \(\Phi\) of \cite{paperBenTal} whose effective domains are intervals contained in \([0,\infty)\). Thus, we consider proper, closed, convex functions \(\phi:\mathbb R\to(-\infty,\infty]\) satisfying \(1\in\operatorname{int}\operatorname{dom}\phi\) and \(\phi(1)=\min_{z\in\mathbb R}\phi(z)=0\). For each \(j=1,\ldots,l+1\), let \(\phi_{\alpha_j}\in\Phi\) and define \(u_{\alpha_j}(x):=-\phi_{\alpha_j}^*(-x)\). Since \(\operatorname{dom}\phi_{\alpha_j}\subseteq[0,\infty)\) and \(\phi_{\alpha_j}(1)=\min\phi_{\alpha_j}=0\), standard convex-conjugacy arguments show that \(u_{\alpha_j}\) is nondecreasing and concave, satisfies \(u_{\alpha_j}(0)=0\), and \(1\in\partial u_{\alpha_j}(0)\). Hence \(u_{\alpha_j}\in\mathcal U_0\).

By \cite[Theorems~4.2 and~4.4]{paperBenTal}, the OCE risk measure
associated with \(u_{\alpha_j}\) has minimal penalty
\(\alpha_{\rho_{u_{\alpha_j}}}^{\min}(Z)
=\mathbb E[\phi_{\alpha_j}(Z)]\), \(Z\in\mathcal D\).

For \(j=1,\ldots,l+1\), define
\(\rho_{j,Y}(X):=\rho_{u_{\alpha_j}}(X+(1-r_j)Y)\).
The liability adjustment in each recovery regime modifies the corresponding
OCE penalty by a linear term. Indeed, a translation in the definition of
the minimal penalty yields
\[
g_{j,Y}(Z)
:=
\alpha_{\rho_{j,Y}}^{\min}(Z)
=
\mathbb E[\phi_{\alpha_j}(Z)]
+
(1-r_j)\mathbb E[ZY],
\qquad Z\in\mathcal D.
\]
Consequently, setting
\[
\beta_Y(Z)
:=
\min_{j=1,\ldots,l+1}g_{j,Y}(Z),
\qquad Z\in\mathcal D,
\]
we obtain
\[
\begin{aligned}
\operatorname{Rec}\rho_{\mathfrak u,\gamma}(X,Y)
&=
\max_{j=1,\ldots,l+1}
\sup_{Z\in\mathcal D}
\left\{
\mathbb E[-ZX]-g_{j,Y}(Z)
\right\}\\
&=
\sup_{Z\in\mathcal D}
\left\{
\mathbb E[-ZX]-\beta_Y(Z)
\right\}.
\end{aligned}
\]
Since the right-hand side is a pointwise supremum of \(\sigma(L^\infty,L^1)\)-continuous affine functions of \(X\), the Recovery OCE is \(\sigma(L^\infty,L^1)\)-lower semicontinuous and therefore has the Fatou property. The representing penalty \(\beta_Y\) need not be convex and therefore need not coincide with the minimal penalty function. We extend \(\beta_Y\) to \(L^1\) by setting \(\beta_Y(Z)=+\infty\) for \(Z\notin\mathcal D\).

\begin{proposition}
Let $\gamma$ be the stepwise level function associated with the finite
recovery specification \eqref{eq:finiteRecoverySpecification}, and fix
$Y\in L^\infty_+$. Then the minimal penalty function of
\[
X\longmapsto
\operatorname{Rec}\rho_{\mathfrak u,\gamma}(X,Y)
\]
is the $\sigma(L^1,L^\infty)$-lower semicontinuous convex envelope of
$\beta_Y$. More precisely,
\[
\alpha_{\operatorname{Rec}\rho_{\mathfrak u,\gamma}(\cdot,Y)}^{\min}
=
\operatorname{cl}_{\sigma(L^1,L^\infty)}
\operatorname{conv}\beta_Y.
\]

\end{proposition}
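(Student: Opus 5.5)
The plan is to work in the dual pair $\langle L^\infty, L^1\rangle$ and identify the minimal penalty as a Fenchel biconjugate. Write $\rho:=\operatorname{Rec}\rho_{\mathfrak u,\gamma}(\cdot,Y)$ and define $f(Z):=\beta_Y(Z)$ on $L^1$, with $f=+\infty$ off $\mathcal D$. By the representation derived just before the statement,
\[
\rho(X)=\sup_{Z\in L^1}\bigl\{\mathbb E[-ZX]-f(Z)\bigr\}=f^*(-X),
\]
where $f^*(X'):=\sup_{Z\in L^1}\{\mathbb E[ZX']-f(Z)\}$ is the convex conjugate with respect to this pairing. On the other hand, extend the minimal penalty $\alpha^{\min}_\rho$ to $L^1$ by $+\infty$ outside $\mathcal D$. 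By definition, $\alpha_\rho^{\min}(Z)=\sup_X\{\mathbb E[-ZX]-\rho(X)\}$, i.e.\ $\alpha_\rho^{\min}(Z)=(\rho\circ(-\cdot))^*(Z)$ for $Z\in\mathcal D$.

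Combining the two displays gives $\alpha_\rho^{\min}=f^{**}$ on $\mathcal D$. The key step is to confirm that this identity also holds off $\mathcal D$. For $Z\notin\mathcal D$, standard monetary arguments give $\sup_X\{\mathbb E[-ZX]-\rho(X)\}=+\infty$: if $Z$ is negative on a set $A$ of positive probability, test $X=t\mathbf 1_A$ with $t\to\infty$ and use monotonicity; if $\mathbb E[Z]\neq1$, test $X=c$ constant and use cash invariance from \Cref{prop:properties-recrho}. Hence $f^{**}=+\infty=\alpha^{\min}_\rho$ outside $\mathcal D$ as well, and $\alpha_\rho^{\min}=f^{**}$ on all of $L^1$.

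It then remains to apply the Fenchel--Moreau theorem in the locally convex space $(L^1,\sigma(L^1,L^\infty))$, whose dual is $L^\infty$. The theorem states that $f^{**}$ equals the lower semicontinuous convex envelope $\operatorname{cl}\operatorname{conv} f$, provided $f$ admits a continuous affine minorant; otherwise $f^{**}\equiv-\infty$. Such a minorant exists: $\phi_{\alpha_j}\geq0$ and $Y\geq0$ give $g_{j,Y}\geq0$ on $\mathcal D$, so $f\geq0$. Moreover, $f$ is proper, since $f(1)=\min_j(1-r_j)\mathbb E[Y]<\infty$. Consequently $f^{**}=\operatorname{cl}_{\sigma(L^1,L^\infty)}\operatorname{conv}\beta_Y$, which proves the claim.

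The main obstacle is bookkeeping rather than depth: one must ensure the conjugates are taken in the correct topology, namely $\sigma(L^1,L^\infty)$ rather than the norm topology, so that $L^\infty$ is exactly the dual. One must also handle the sign convention $X\mapsto -X$ and the extension to $+\infty$ outside $\mathcal D$ consistently. A slightly delicate point is that $\operatorname{conv}\beta_Y$, defined as the largest convex minorant, already equals $+\infty$ off the convex set $\mathcal D$. The closure could in principle assign finite values outside $\mathcal D$, but this is excluded by the identity with $\alpha^{\min}_\rho$ established above, so no separate argument is needed.
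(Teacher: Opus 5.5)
Your proposal is correct and follows essentially the same route as the paper: it writes the Recovery OCE as $\beta_Y^*(-X)$, identifies the minimal penalty with $\beta_Y^{**}$ via the substitution $U=-X$, and invokes Fenchel--Moreau in $(L^1,\sigma(L^1,L^\infty))$. Your additional checks make explicit points that the paper's proof leaves implicit: that $\beta_Y\geq0$ supplies the affine minorant Fenchel--Moreau requires, that $\beta_Y$ is proper, and that the supremum formula equals $+\infty$ off $\mathcal D$.
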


\begin{proof}
By the preceding representation and the extension of \(\beta_Y\) to
\(L^1\),
\[
\operatorname{Rec}\rho_{\mathfrak u,\gamma}(X,Y)
=
\sup_{Z\in L^1}
\left\{
\mathbb E[-ZX]-\beta_Y(Z)
\right\}
=
\beta_Y^*(-X),
\]
where the conjugate is taken with respect to the dual pair
\((L^1,L^\infty)\) with pairing
\(\langle Z,X\rangle=\mathbb E[ZX]\).
Hence, by the definition of the minimal penalty and substitution $U=-X$,
\[
\begin{aligned}
\alpha_{\operatorname{Rec}\rho_{\mathfrak u,\gamma}(\cdot,Y)}^{\min}(Z)
&=
\sup_{X\in L^\infty}
\left\{
\mathbb E[-ZX]
-
\operatorname{Rec}\rho_{\mathfrak u,\gamma}(X,Y)
\right\} \\
&=
\sup_{U\in L^\infty}
\left\{
\mathbb E[ZU]-\beta_Y^*(U)
\right\} \\
&=
\beta_Y^{**}(Z).
\end{aligned}
\]
By the Fenchel--Moreau biconjugation theorem, $\beta_Y^{**}$ is the $\sigma(L^1,L^\infty)$-lower semicontinuous convex envelope of $\beta_Y$. Therefore,
\[
\alpha_{\operatorname{Rec}\rho_{\mathfrak u,\gamma}(\cdot,Y)}^{\min}
=
\beta_Y^{**}
=
\operatorname{cl}_{\sigma(L^1,L^\infty)}
\operatorname{conv}\beta_Y.
\]

\end{proof}

The convexification in the preceding representation can be strict. Indeed, even for two recovery regimes generated by standard entropic OCEs, the pointwise minimum of the regime-wise minimal penalty functions may fail to be convex. \Cref{app:strict-convexification} provides an explicit two-state example. Thus, the convexification in the minimal penalty representation cannot in general be omitted.

%%%%%%%%%%%%%%%%%%%%%%%%%%%%%%%%%%%%%%%%%%%%%%%%
%%%%%%%%%%%%%%%%%%%%%%%%%%%%%%%%%%%%%%%%%%%%%%%%
%%%%%%%%%%%%%%%%%%%%%%%%%%%%%%%%%%%%%%%%%%%%%%%%

\subsection{Effective and Redundant Recovery Regimes}
\label{sec:redundancy}

For the finite recovery specification, write
\[
R_{j,Y}(X)
:=
\rho_{u_{\alpha_j}}\bigl(X+(1-r_j)Y\bigr),
\qquad j=1,\ldots,l+1.
\]
Then
\[
\operatorname{Rec}\rho_{\mathfrak u,\gamma}(X,Y)
=
\max_{j=1,\ldots,l+1}R_{j,Y}(X).
\]
A natural question is whether each recovery requirement affects the resulting risk measure, or whether some requirements are already implied by the remaining ones. This motivates the following notion.

\begin{definition}
Fix \(Y\in L^\infty_+\), \(j\in\{1,\ldots,l+1\}\), and define $R_{- j,Y}(X) := \max_{k\neq j}R_{k,Y}(X)$. Regime \(j\) is called \emph{redundant for \(Y\)} if
\[
R_{j,Y}(X)\leq R_{-j,Y}(X)
\qquad
\text{for all }X\in L^\infty.
\]
Otherwise, regime $j$ is called \emph{effective for Y} if  \(R_{j,Y}(X)>R_{-j,Y}(X)\) for some \(X\in L^\infty\), that is, if it is uniquely active at some position.
\end{definition}

We first characterize redundancy for a fixed liability position in terms of the regime-wise minimal penalties. Recall that \(g_{j,Y}\) is the minimal penalty of \(R_{j,Y}\). Let \(\alpha_{-j,Y}^{\min}\) denote the minimal penalty of \(R_{-j,Y}\). By the convexification result above,
\[
\alpha_{-j,Y}^{\min}
=
\operatorname{cl}_{\sigma(L^1,L^\infty)}
\operatorname{conv}
\left(
\min_{k\neq j}g_{k,Y}
\right),
\]
where the pointwise minimum is extended by \(+\infty\) outside
\(\mathcal D\).

\begin{proposition}\label{proposition:redundancyDual}
Fix \(Y\in L^\infty_+\) and \(j\in\{1,\ldots,l+1\}\). Then regime \(j\)
is redundant for \(Y\) if and only if
\[
g_{j,Y}(Z)
\geq
\alpha_{-j,Y}^{\min}(Z)
\qquad
\text{for all }Z\in\mathcal D.
\]
\end{proposition}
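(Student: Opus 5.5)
The proof is a direct application of the dual representation of convex monetary risk measures with the Fatou property, together with the order-reversing correspondence between such risk measures and their minimal penalties. Both $R_{j,Y}$ and $R_{-j,Y}$ are convex monetary risk measures on $L^\infty$ with the Fatou property. For $R_{j,Y}$ this holds because it is a translated OCE with minimal penalty $g_{j,Y}$. For $R_{-j,Y}$ it follows from the argument preceding the convexification proposition, applied to the subfamily $k\neq j$. Hence
\[
R_{j,Y}(X)=\sup_{Z\in\mathcal D}\bigl\{\mathbb E[-ZX]-g_{j,Y}(Z)\bigr\},
\qquad
R_{-j,Y}(X)=\sup_{Z\in\mathcal D}\bigl\{\mathbb E[-ZX]-\alpha_{-j,Y}^{\min}(Z)\bigr\}.
\]

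\emph{Sufficiency.} Assume $g_{j,Y}\geq\alpha_{-j,Y}^{\min}$ on $\mathcal D$. For every $X$ and every $Z$, the affine term $\mathbb E[-ZX]-g_{j,Y}(Z)$ is then bounded above by $\mathbb E[-ZX]-\alpha_{-j,Y}^{\min}(Z)$. Taking suprema over $Z\in\mathcal D$ in the two representations above gives $R_{j,Y}(X)\leq R_{-j,Y}(X)$ for all $X$, which is redundancy.

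\emph{Necessity.} Assume $R_{j,Y}\leq R_{-j,Y}$ on $L^\infty$. We use the defining formula of the minimal penalty,
\[
\alpha^{\min}_{\rho}(Z)=\sup_{X\in L^\infty}\bigl\{\mathbb E[-ZX]-\rho(X)\bigr\},
\]
which is antitone in $\rho$. Consequently, for every $Z\in\mathcal D$,
\[
\alpha_{-j,Y}^{\min}(Z)=\sup_X\bigl\{\mathbb E[-ZX]-R_{-j,Y}(X)\bigr\}\leq\sup_X\bigl\{\mathbb E[-ZX]-R_{j,Y}(X)\bigr\}=g_{j,Y}(Z).
\]
The last equality uses that $g_{j,Y}$ is the minimal penalty of $R_{j,Y}$, obtained earlier by translating the Ben-Tal--Teboulle penalty $\mathbb E[\phi_{\alpha_j}(Z)]$.

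No step here should be a real obstacle. The one point needing care is that $g_{j,Y}$ must be the \emph{minimal} penalty and not merely some representing penalty. Otherwise the implication from redundancy to the penalty inequality could fail, since a non-minimal penalty can exceed the minimal one while representing the same risk measure. This identification was established before the statement via \cite[Theorems~4.2 and~4.4]{paperBenTal} and the translation computation, so we invoke it directly. Finiteness of all quantities on $L^\infty$ is guaranteed by the standing assumption that the OCEs are finite-valued. Extending both penalties by $+\infty$ outside $\mathcal D$ does not affect the comparison, because the inequality is only claimed on $\mathcal D$.
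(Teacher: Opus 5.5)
Your proof is correct and takes the same route as the paper. The paper compresses your two directions into a single appeal to the order reversal between convex risk measures (with the Fatou property) and their minimal penalties, combined with the identification $\alpha^{\min}_{R_{j,Y}}=g_{j,Y}$. Your split into sufficiency via the dual representations and necessity via antitonicity of the defining supremum is exactly what that correspondence amounts to.
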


\begin{proof}
Redundancy is equivalent to
\(R_{j,Y}\leq R_{-j,Y}\) on \(L^\infty\). By order reversal between
convex risk measures and their minimal penalty functions, this is equivalent
to
\[
\alpha_{R_{j,Y}}^{\min}
\geq
\alpha_{-j,Y}^{\min}.
\]
Since \(\alpha_{R_{j,Y}}^{\min}=g_{j,Y}\), the assertion follows.
\end{proof}

The exact criterion in \Cref{proposition:redundancyDual} involves the convexified penalty of all remaining regimes. A simpler sufficient condition is pairwise dominance: for \(j\neq k\),
\[
R_{j,Y}\leq R_{k,Y}\ \text{on }L^\infty
\quad\Longleftrightarrow\quad
g_{j,Y}\geq g_{k,Y}\ \text{on }\mathcal D.
\]
Hence, dominance by a single regime implies redundancy, but is not necessary: redundancy may instead result from the combined effect of several regimes.\\

The preceding criteria concern redundancy for a fixed liability position \(Y\). For the design of a recovery specification, however, it is useful to identify regimes that can be removed independently of the particular asset--liability position. This leads to the following stronger notion.

\begin{definition}
A recovery regime \(j\) is called \emph{universally redundant} if it is redundant for every \(Y\in L^\infty_+\).
\end{definition}

\begin{proposition}[Structural screening of recovery regimes]
\label{proposition:structuralScreening}
Let $j\in\{1,\ldots,l+1\}$, and let $r_1,\ldots,r_{l+1}$ denote the recovery levels of the finite recovery specification. Suppose that there exist weights $\xi_k\geq0$, $k\neq j$, with $\sum_{k\neq j}\xi_k=1$ such that
\[
    r_j\leq\sum_{k\neq j}\xi_k r_k
\quad\text{ and } \quad
    \phi_{\alpha_j}(z)
    \geq
    \sum_{k\neq j}\xi_k\phi_{\alpha_k}(z)
    \qquad
    \text{for every }z\geq0.
\]
Then regime $j$ is universally redundant.
\end{proposition}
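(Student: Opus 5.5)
We fix an arbitrary $Y\in L^\infty_+$ and verify the dual criterion of \Cref{proposition:redundancyDual}: we show $g_{j,Y}(Z)\geq\alpha_{-j,Y}^{\min}(Z)$ for every $Z\in\mathcal D$. Since $\alpha_{-j,Y}^{\min}$ is the closed convex envelope of $\min_{k\neq j}g_{k,Y}$, it is the largest $\sigma(L^1,L^\infty)$-lower semicontinuous convex minorant of that minimum. In particular, for fixed $Z\in\mathcal D$, convexity yields
\[
\alpha_{-j,Y}^{\min}(Z)\leq\sum_{k\neq j}\xi_k\,\alpha_{-j,Y}^{\min}(Z)\leq\sum_{k\neq j}\xi_k\min_{i\neq j}g_{i,Y}(Z)\leq\sum_{k\neq j}\xi_k g_{k,Y}(Z).
\]
Here the same point $Z$ is used in every term, so only the trivial bound $\alpha_{-j,Y}^{\min}\leq\min_{i\neq j}g_{i,Y}\leq g_{k,Y}$ is needed. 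It therefore suffices to show
\[
g_{j,Y}(Z)\geq\sum_{k\neq j}\xi_k g_{k,Y}(Z),\qquad Z\in\mathcal D.
\]

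We expand both sides using $g_{k,Y}(Z)=\mathbb E[\phi_{\alpha_k}(Z)]+(1-r_k)\mathbb E[ZY]$. Since $\sum_{k\neq j}\xi_k=1$, the right-hand side equals
\[
\mathbb E\Big[\sum_{k\neq j}\xi_k\phi_{\alpha_k}(Z)\Big]+\Big(1-\sum_{k\neq j}\xi_k r_k\Big)\mathbb E[ZY].
\]
We compare the two terms separately. For the divergence term, $Z\geq0$ a.s. for $Z\in\mathcal D$, so the pointwise hypothesis gives $\phi_{\alpha_j}(Z)\geq\sum_{k\neq j}\xi_k\phi_{\alpha_k}(Z)$ a.s. Taking expectations yields the inequality for the first term. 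For the linear term, $Z\geq0$ and $Y\geq0$ give $\mathbb E[ZY]\geq0$. Combined with $r_j\leq\sum_{k\neq j}\xi_k r_k$, this gives $(1-r_j)\mathbb E[ZY]\geq(1-\sum_{k\neq j}\xi_k r_k)\mathbb E[ZY]$. Adding the two inequalities gives the claim, so regime $j$ is redundant for $Y$. Since $Y\in L^\infty_+$ was arbitrary, regime $j$ is universally redundant.

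The only delicate point is integrability and the handling of $+\infty$ values. The expectations $\mathbb E[\phi_{\alpha_k}(Z)]$ may equal $+\infty$, and $\phi_{\alpha_k}\geq0$ because $\min\phi_{\alpha_k}=0$. So all expectations are well defined in $[0,\infty]$, linearity of the expectation holds for nonnegative functions, and $\mathbb E[ZY]$ is finite because $Y\in L^\infty$. If $g_{j,Y}(Z)=+\infty$, the required inequality is trivial. Otherwise the pointwise bound forces every term $\xi_k\mathbb E[\phi_{\alpha_k}(Z)]$ to be finite, so the arithmetic above is legitimate. I expect this bookkeeping, rather than any conceptual step, to be the main thing to check carefully.
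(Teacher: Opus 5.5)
Your proof is correct and follows the paper's argument essentially step for step. You fix $Y$, derive $g_{j,Y}(Z)\geq\sum_{k\neq j}\xi_k g_{k,Y}(Z)\geq\min_{k\neq j}g_{k,Y}(Z)\geq\alpha^{\min}_{-j,Y}(Z)$ from the pointwise divergence inequality and $\mathbb E[ZY]\geq0$, and then invoke \Cref{proposition:redundancyDual}; the first step in your displayed chain is just the identity $\sum_{k\neq j}\xi_k=1$ rather than convexity, and your $+\infty$ bookkeeping plays the role of the paper's restriction of the sums to indices with $\xi_k>0$.
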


\begin{proof}
Fix \(Y\in L^\infty_+\) and \(Z\in\mathcal D\). Then
\[
\begin{aligned}
g_{j,Y}(Z)
&=
\mathbb E[\varphi_{\alpha_j}(Z)]
+(1-r_j)\mathbb E[ZY]\\
&\geq
\sum_{\substack{k\ne j\\ \xi_k>0}}
\xi_k\mathbb E[\varphi_{\alpha_k}(Z)]
+
\sum_{\substack{k\ne j\\ \xi_k>0}}
\xi_k(1-r_k)\mathbb E[ZY]\\
&=
\sum_{\substack{k\ne j\\ \xi_k>0}}
\xi_k g_{k,Y}(Z)\\
&\geq
\min_{k\ne j}g_{k,Y}(Z)
\geq
\alpha^{\min}_{-j,Y}(Z).
\end{aligned}
\]
\Cref{proposition:redundancyDual} therefore implies that regime \(j\)
is redundant for \(Y\). Since \(Y\in L^\infty_+\) was arbitrary, regime
\(j\) is universally redundant.
\end{proof}

\Cref{proposition:structuralScreening} yields a position-independent screening criterion based solely on the recovery levels \((r_k)\) and the regime-wise divergence functions \((\phi_{\alpha_k})\). Its sharpness, however, depends on the structure of the underlying divergence family. We first consider RecAV@R, for which universal redundancy admits a particularly simple exact characterization.

For AV@R, the regime-wise divergence function is
\[
    \phi_{\alpha}(z)
    =
    \begin{cases}
        0, & 0\leq z\leq 1/\alpha,\\
        +\infty, & \text{otherwise},
    \end{cases}
\]
and the corresponding dual set is $\mathcal D_\alpha = \left\{ Z\in L^1_+: \mathbb E[Z]=1,\; Z\leq 1/\alpha\ \text{a.s.} \right\}$. Universal redundancy then reduces exactly to pairwise dominance.

\begin{corollary}
\label{cor:recavar-redundancy}
Consider a finite RecAV@R specification with regimes \((r_k,\alpha_k)\), \(k=1,\ldots,l+1\), and fix \(j\). Suppose that there exists \(A\in\mathcal F\) such that
\(\mathbb P(A)=\alpha_j\). Then regime \(j\) is universally redundant if
and only if there exists \(k\neq j\) such that
\[
    r_k\geq r_j,
    \qquad
    \alpha_k\leq\alpha_j.
\]
In particular, this characterization holds for every regime on an atomless
probability space.
\end{corollary}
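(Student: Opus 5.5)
The plan is to prove the two directions separately. Sufficiency comes directly from \Cref{proposition:structuralScreening}. Necessity comes from a test position $Y$ paired with a test density $Z$ supported on the set $A$.

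\emph{Sufficiency.} Suppose some $k\neq j$ has $r_k\geq r_j$ and $\alpha_k\leq\alpha_j$. Take $\xi_k=1$ and all other weights zero. Then $r_j\leq r_k=\sum_i\xi_i r_i$. Since $1/\alpha_k\geq 1/\alpha_j$, the indicator divergence satisfies $\phi_{\alpha_j}\geq\phi_{\alpha_k}$ on $[0,\infty)$. \Cref{proposition:structuralScreening} then gives universal redundancy. This direction does not use the set $A$.

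\emph{Necessity, by contraposition.} Assume every $k\neq j$ has $r_k<r_j$ or $\alpha_k>\alpha_j$. Set $Z^*:=\mathbf 1_A/\alpha_j$, so $Z^*\in\mathcal D_{\alpha_j}$ and $g_{j,Y}(Z^*)=(1-r_j)\mathbb E[Z^*Y]<\infty$. We need a $Y\in L^\infty_+$ with $g_{j,Y}(Z^*)<\alpha^{\min}_{-j,Y}(Z^*)$; then \Cref{proposition:redundancyDual} shows regime $j$ is not redundant for $Y$. Split the other regimes into $K_1:=\{k\neq j:\alpha_k>\alpha_j\}$ and $K_2:=\{k\neq j:\alpha_k\leq\alpha_j,\ r_k<r_j\}$. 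Choose $Y=c\mathbf 1_{A}$ with a large constant $c>0$.

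Every $g_{k,Y}$ is convex, and $\min_{k\neq j}g_{k,Y}\geq h_Y:=\min\bigl(\min_{K_1}g_{k,Y},\min_{K_2}g_{k,Y}\bigr)$. Rather than handle the convex envelope directly, we will exhibit a single affine minorant of $\min_{k\neq j}g_{k,Y}$ on $\mathcal D$ that strictly exceeds $g_{j,Y}$ at $Z^*$. Any affine minorant is continuous, so it stays below the closed convex envelope, and this gives $\alpha^{\min}_{-j,Y}(Z^*)>g_{j,Y}(Z^*)$. Use the affine functional
\[
\ell(Z):=(1-r_j)c\,\mathbb E[Z\mathbf 1_A]+\varepsilon\bigl(\mathbb E[Z\mathbf 1_A]-1\bigr)\quad\text{(with a suitably signed } \varepsilon\text{)}.
\]
Note $\ell(Z^*)=g_{j,Y}(Z^*)=(1-r_j)c$ when $\varepsilon$-terms vanish, since $\mathbb E[Z^*\mathbf 1_A]=1$.

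The two index sets are then checked as follows.
\begin{itemize}
\item[(a)] For $k\in K_2$: $g_{k,Y}(Z)\geq(1-r_k)c\,\mathbb E[Z\mathbf 1_A]$. Since $1-r_k>1-r_j$, at $Z^*$ this exceeds $(1-r_j)c$ by $(r_j-r_k)c$.
\item[(b)] For $k\in K_1$: $g_{k,Y}$ is finite only when $Z\leq 1/\alpha_k<1/\alpha_j$. This forces $\mathbb E[Z\mathbf 1_A]\leq\alpha_j/\alpha_k<1$, so $Z$ cannot concentrate fully on $A$.
\end{itemize}

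Combining these, I would take the minorant $\ell(Z)=a\,\mathbb E[Z\mathbf 1_A]+b$ and choose $a,b$ so that it lies below $(1-r_k)c\,\mathbb E[Z\mathbf 1_A]$ for $k\in K_2$ on $\{t:=\mathbb E[Z\mathbf 1_A]\in[0,1]\}$. It must also lie below $0\le g_{k,Y}$ for $k\in K_1$ on $t\leq\alpha_j/\alpha_{\min K_1}=:\bar t<1$, while having $\ell(Z^*)=a+b>(1-r_j)c$. Pick $a=(1-r_{\max K_2})c$, which is larger than $(1-r_j)c$, with $b$ slightly below $0$, adjusting for $K_1$. The line $t\mapsto a(t-\bar t)$ is $\leq0$ for $t\leq\bar t$ and $\leq at$ for $t\geq0$. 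Its value at $t=1$ is $a(1-\bar t)$. Taking a steeper slope $M$, the function $M(t-\bar t)$ reaches $M(1-\bar t)>(1-r_j)c$ at $t=1$ for large $M$. The slope is limited by the $K_2$ constraint $M(t-\bar t)\leq (1-r_k)c\,t$, which I will verify on $[\bar t,1]$.

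\emph{Main obstacle.} This one-dimensional geometry in $t$ is the delicate step, and it is where the choice $Y=c\mathbf 1_A$ with $c$ free and a mixed minorant (a max of affine pieces in $t$, such as $\min$ of the lower convex hull) has to be tuned. Concretely, I will compute the lower convex envelope in $t$ of
\[
f(t)=\min\bigl(\{0\text{ on }[0,\bar t]\},\ \{(1-r_{\max K_2})c\,t\}\bigr),
\]
and check that its value at $t=1$ exceeds $(1-r_j)c$. If $K_1=\emptyset$ this is immediate. If $K_2=\emptyset$, the value is $+\infty$ at $t=1$. In the mixed case, the hull value at $1$ is the smaller of the two pieces there, which is $(1-r_{\max K_2})c>(1-r_j)c$. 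The remaining work is to confirm this against the actual hull of the two pieces, not merely their values at $t=1$. Finally, on an atomless space a set $A$ with $\mathbb P(A)=\alpha_j$ always exists, which gives the last assertion of \Cref{cor:recavar-redundancy}.
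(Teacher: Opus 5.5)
Your proof is correct. It reaches the same construction as the paper, but in dual rather than primal form.

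\textbf{How the two routes differ.} For necessity, the paper stays primal. It takes $X=-x\mathbf 1_A$ and $Y=y\mathbf 1_A$, computes $R_{k,Y}(X)=(x-s_ky)\min\{1,\alpha_j/\alpha_k\}$ from the dual representation of AV@R, and then takes $x$ large. You instead fix the test density $Z^*=\mathbf 1_A/\alpha_j$. You then certify $\alpha^{\min}_{-j,Y}(Z^*)>g_{j,Y}(Z^*)$ through a $\sigma(L^1,L^\infty)$-continuous affine minorant in $t=\mathbb E[Z\mathbf 1_A]$, and conclude via \Cref{proposition:redundancyDual}.

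\textbf{Closing your ``main obstacle''.} This step closes at once. Set $s_2:=\min_{k\in K_2}(1-r_k)>1-r_j$ and $\bar t:=\max_{k\in K_1}\alpha_j/\alpha_k<1$. Take the chord $\ell(Z)=M(\mathbb E[Z\mathbf 1_A]-\bar t)$ with $M=s_2c/(1-\bar t)$.
\begin{itemize}
\item $\ell\leq 0$ on $[0,\bar t]$, and $\ell\leq s_2ct$ on $[0,1]$; checking the endpoints $t=0,1$ suffices.
\item $\ell(Z^*)=s_2c>(1-r_j)c=g_{j,Y}(Z^*)$.
\item If $K_2=\emptyset$, any $M>(1-r_j)c/(1-\bar t)$ works. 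If $K_1=\emptyset$, take $\ell=s_2c\,t$.
\end{itemize}
Any $c>0$ suffices, and the tentative $\varepsilon$-term and ``$b$ slightly below $0$'' can simply be dropped.

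\textbf{Why the constructions coincide.} Your minorant is precisely the conjugate certificate of the paper's position. The inequality $\ell\leq\min_{k\neq j}g_{k,Y}$ gives $R_{-j,Y}(-M\mathbf 1_A)\leq M\bar t$. On the other hand, $R_{j,Y}(-M\mathbf 1_A)=M-(1-r_j)c$. Hence $R_{j,Y}-R_{-j,Y}\geq M(1-\bar t)-(1-r_j)c>0$, which is the paper's construction with $x=M$ and $y=c$.

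\textbf{What each route buys.} The paper's primal route is shorter. It exhibits the uniquely active position explicitly and never invokes the convexification result. Yours shows how the dual criterion is applied in practice, but it passes through $\operatorname{cl}\operatorname{conv}$ only to rebuild the same primal position implicitly.

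\textbf{Sufficiency.} Your use of \Cref{proposition:structuralScreening} with a unit weight is valid. The AV@R indicator divergences satisfy $\phi_{\alpha_j}\geq\phi_{\alpha_k}$ on $[0,\infty)$ when $\alpha_k\leq\alpha_j$. The paper instead compares directly using monotonicity of AV@R in the level and in the position; this is equivalent here.
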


\begin{proof}
See \Cref{app:recavar-redundancy}.
\end{proof}

On an atomless probability space, universal redundancy in RecAV@R reduces to pairwise dominance: a regime cannot become universally redundant solely through the combined effect of several individually non-dominating regimes. Under the ordered finite recovery specification \eqref{eq:finiteRecoverySpecification}, regime \(j<l+1\) is universally redundant if and only if $\alpha_{j+1}=\alpha_j$. In particular, if the AV@R levels are strictly increasing, no recovery regime is universally redundant.

A different structure arises for common-scale divergence families, including the entropic family. Suppose that
\[
\phi_{\theta}(z)=\theta\phi(z),
\qquad \theta>0,
\]
for some \(\phi\in\Phi\). Since \(\phi\geq0\), the conditions in \Cref{proposition:structuralScreening} imply that regime \(j\) is universally redundant whenever the convex hull of the remaining regime parameter pairs \(\{(r_k,\theta_k):k\neq j\}\) contains a point \((\bar r,\bar\theta)\) satisfying \(\bar r\geq r_j\) and \(\bar\theta\leq\theta_j\). For general common-scale families, this condition is only sufficient. For the power-divergence family introduced below, we show that it is also necessary on atomless probability spaces.

\begin{definition}
For \(p\geq1\), define the power-divergence generator
\[
\phi_p(z)
=
\begin{cases}
z\log z-z+1, & p=1,\\[1mm]
\dfrac{z^p-pz+p-1}{p(p-1)}, & p>1,
\end{cases}
\qquad z\geq0,
\]
with \(0\log0:=0\), and set \(\phi_p(z)=+\infty\) for \(z<0\).
These functions form a subfamily of the Cressie--Read power-divergence
family \cite{cressie1984multinomial}. For fixed \(p\geq1\), define the associated common-scale family by
\[
\phi_{\theta,p}(z):=\theta\phi_p(z),
\qquad \theta>0.
\]
We refer to the corresponding OCEs as \emph{power-divergence OCEs}.
\end{definition}
The cases \(p=1\) and \(p=2\) correspond to the entropic and truncated-quadratic OCEs, respectively, with \(\phi_{\theta,1}(z)=\theta(z\log z-z+1)\) and \(\phi_{\theta,2}(z)=\frac{\theta}{2}(z-1)^2\).

\begin{theorem}[Exact characterization of universal redundancy]
\label{theorem:powerUniversalRedundancy}
Assume that \((\Omega,\mathcal F,\mathbb P)\) is atomless and fix \(p\geq1\). Consider power-divergence OCE regimes with parameters \((r_k,\theta_k)\), \(k=1,\ldots,l+1\), where \(r_k\in[0,1]\) and \(\theta_k>0\), each generated by $\phi_{\theta_k,p}(z)=\theta_k\phi_p(z)$. Set \mbox{\(q_k:=(\theta_k,1-r_k)\)}. For \(j\in\{1,\ldots,l+1\}\), the following statements are equivalent:
\begin{enumerate}
    \item[(i)] Regime \(j\) is universally redundant.

    \item[(ii)]
    $q_j\in\operatorname{conv}\{q_k:k\neq j\}+\mathbb R_+^2$.

    \item[(iii)] There exist weights \(\xi_k\geq0\), \(k\neq j\), such that
    \[
    \sum_{k\neq j}\xi_k=1,\qquad
    \theta_j\geq\sum_{k\neq j}\xi_k\theta_k,\qquad
    r_j\leq\sum_{k\neq j}\xi_k r_k.
    \]
\end{enumerate}
\end{theorem}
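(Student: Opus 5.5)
The equivalence (ii)$\Leftrightarrow$(iii) is a restatement and needs no work. Indeed $\operatorname{conv}\{q_k:k\neq j\}$ consists of the points $\sum_{k\neq j}\xi_k q_k$ with simplex weights $\xi$, and $q_j\in\operatorname{conv}\{q_k\}+\mathbb R_+^2$ means $\theta_j\geq\sum\xi_k\theta_k$ and $1-r_j\geq\sum\xi_k(1-r_k)$. The second inequality is equivalent to $r_j\leq\sum\xi_k r_k$ because $\sum\xi_k=1$.

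For (iii)$\Rightarrow$(i) I will invoke \Cref{proposition:structuralScreening}. Since $\phi_p\geq0$ on $[0,\infty)$, the inequality $\theta_j\geq\sum\xi_k\theta_k$ gives $\theta_j\phi_p(z)\geq\sum_{k\neq j}\xi_k\theta_k\phi_p(z)$ for all $z\geq0$. Together with $r_j\leq\sum\xi_k r_k$, the proposition yields universal redundancy. This direction does not use atomlessness.

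The substance is (i)$\Rightarrow$(ii), which I will prove by contraposition. Suppose $q_j\notin C:=\operatorname{conv}\{q_k:k\neq j\}+\mathbb R_+^2$. The set $C$ is closed, convex and upward closed, so strict separation gives a vector $(a,b)$ with $a,b\geq0$, $(a,b)\neq0$, and some $\delta>0$ such that
\[
a\theta_j+b(1-r_j)+\delta\leq a\theta_k+b(1-r_k)\qquad\text{for all }k\neq j.
\]
(The signs $a,b\ge 0$ come from the recession cone $\mathbb R_+^2$ of $C$.) The goal is then to exhibit $Y\in L^\infty_+$ and $X\in L^\infty$ with $R_{j,Y}(X)>\max_{k\neq j}R_{k,Y}(X)$. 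The guiding observation is that for a fixed density $Z\in\mathcal D$,
\[
g_{k,Y}(Z)=\theta_k\,s+(1-r_k)\,t,\qquad s:=\mathbb E[\phi_p(Z)],\quad t:=\mathbb E[ZY].
\]
Hence each regime's penalty at $Z$ is the linear functional $\langle q_k,(s,t)\rangle$. If $(s,t)$ can be made proportional to $(a,b)$, the separation says that regime $j$ carries a strictly smaller penalty at $Z$ than every other regime.

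A small pointwise gap in the penalties is not yet enough, because the minimal penalty of $R_{-j,Y}$ is the convexified envelope. I will therefore work on the primal side. Using atomlessness, I will fix a set $B$ with small probability $\varepsilon$ and take $Z^\ast:=\zeta\mathbf 1_B+\zeta'\mathbf 1_{B^c}$ with $\mathbb E Z^\ast=1$, together with $Y:=y\mathbf 1_B$. The parameters are tuned so that $(\mathbb E\phi_p(Z^\ast),\mathbb E[Z^\ast Y])$ is a positive multiple of $(a,b)$. The case $b=0$ is handled with $Y=0$, and the case $a=0$ in a limit. Next I will set $X:=-\theta_j\phi_p'(Z^\ast)-(1-r_j)Y$. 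By the first-order conditions in the Ben-Tal--Teboulle duality, this makes $Z^\ast$ the optimal density for regime $j$, so that
\[
R_{j,Y}(X)=\mathbb E[-Z^\ast X]-g_{j,Y}(Z^\ast).
\]
Since $X$ and $Y$ are two-valued on $\{B,B^c\}$, each $R_{k,Y}(X)$, for $k\neq j$, is the OCE of a two-point distribution. It can be computed, or at least bounded, through the scaling identity $\rho_{\theta}(W)=\theta\rho_{1}(W/\theta)$, which holds for the common-scale family because $\phi_{\theta,p}=\theta\phi_p$.

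The main obstacle is this last comparison. One must show $R_{k,Y}(X)<R_{j,Y}(X)$ for every $k\neq j$ simultaneously, even though the optimal densities $Z_k$ of the other regimes differ from $Z^\ast$. A naive small-perturbation expansion fails here: it produces penalties of order $1/\theta_k$, and the convexity of $1/\theta$ points the wrong way. My first attempt will instead use a large-deviation regime, letting $\varepsilon\to0$ with $\zeta\to\infty$. In that regime $\mathbb E\phi_p(Z)$ is dominated by the single atom $B$ and both $s$ and $t$ scale linearly, so every regime's value reduces asymptotically to an affine function of $(\theta_k,1-r_k)$ evaluated along $(a,b)$. The margin $\delta$ should then survive the limit and give regime $j$ as the unique active regime.
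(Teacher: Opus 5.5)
Your treatment of (ii)$\Leftrightarrow$(iii), of (iii)$\Rightarrow$(i) via \Cref{proposition:structuralScreening}, and of the separation step (including $a,b\geq0$) agrees with the paper. The gap is the final comparison. You leave it at ``should then survive the limit'', and with your choice $X=-\theta_j\phi_p'(Z^\ast)-(1-r_j)Y$ it actually fails for $p>1$.

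In your rare-event regime, write $\mathbb P(B)=\varepsilon$ with $\varepsilon^{1-p}=p(p-1)HT$, $Y=Ty'\mathbf 1_B$, and let $\kappa=\varepsilon\zeta$ be the mass of $Z^\ast$ on $B$. Your proportionality then reads $(H\kappa^p,y'\kappa)=c\,(a,b)$. Up to a bounded term, $X=-Tx\mathbf 1_B$ with $x=p\theta_jH\kappa^{p-1}+(1-r_j)y'$. The paper's two-point reduction, together with $1$-Lipschitz continuity, gives $T^{-1}R_{k,Y}(X)\to V_k:=\sup_{\kappa'\in[0,1]}\{(x-(1-r_k)y')\kappa'-\theta_kH\kappa'^p\}$. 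This is a supremum of functions affine in $(\theta_k,1-r_k)$, so $V_k$ is convex in the regime parameters. It is affine only where the maximizer is the endpoint $\kappa'=1$. Your $X$ is the first-order choice for regime $j$ at its own maximizer $\kappa'=\kappa$, so separation yields only $V_k\geq V_j-c\langle(a,b),q_k-q_j\rangle$. That is a lower bound lying below $V_j$, the same convexity obstruction you noticed for small perturbations.

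The obstruction genuinely bites. Take $p=2$, $q_j=(1,0.85)$, $q_1=(0.5,1)$, $q_2=(4,0)$. Condition (iii) fails, and the admissible separators are exactly those with $b/a\in(10/3,\,3/0.85)$. For every such separator and every $\kappa\in(0,1]$, your construction gives $V_2/V_j=(2+0.85\,b/a)^2/16>1.4$. So regime $2$, not regime $j$, is strictly active. For $p=1$ your choice can be rescued, via the exact identity $R_{j,Y}(X)=0$ and exponential decay of the $B$-term in each $R_{k,Y}(X)$. It is not rescued by the affine leading-order argument you sketch, since there all leading-order values vanish.

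The missing idea is to choose the loss so that one and the same density, the extreme one $\mathbf 1_B/\mathbb P(B)$, is asymptotically optimal for every regime simultaneously. This requires a loss on $B$ steeper than every regime's first-order threshold, not one tangent to regime $j$. In your notation, the paper takes $X_t=-tx\mathbf 1_{B_t}$ and $Y_t=tb\mathbf 1_{B_t}$, with $\mathbb P(B_t)=(p(p-1)at)^{-1/(p-1)}$ for $p>1$ (resp.\ $e^{-ta}$ for $p=1$) and $x>\max_k\{b(1-r_k)+pa\theta_k\}$. Then $\kappa'\mapsto(x-b(1-r_k))\kappa'-a\theta_k\kappa'^p$ is increasing on $[0,1]$ for every $k$, so all regimes are maximized at $\kappa'=1$. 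Consequently $t^{-1}R_{k,Y_t}(X_t)\to x-a\theta_k-b(1-r_k)$, which is $x$ minus your separating functional at $q_k$, and the margin makes regime $j$ strictly active for large $t$. This is the rigorous form of your guiding observation: at a common density, the regime penalties are the linear functionals $\langle q_k,(\mathbb E[\phi_p(Z)],\mathbb E[ZY])\rangle$. Finally, your case $a=0$ needs no limit. Taking $X=0$ and $Y\equiv c>0$ gives $R_{k,Y}(0)=-(1-r_k)c$, which is uniquely maximal for $k=j$.
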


\begin{proof}[Proof idea]
The equivalence of \textup{(ii)} and \textup{(iii)} is immediate from the definition of $q_k$, while \textup{(iii)}$\Rightarrow$\textup{(i)} follows from \Cref{proposition:structuralScreening}, since $\phi_{\theta,p}=\theta\phi_p$ and $\phi_p\geq0$.

For the converse, suppose that \textup{(ii)} fails. Strict separation of \(q_j\) from \(\operatorname{conv}\{q_k:k\neq j\}+\mathbb R_+^2\) yields $h,a\geq0$, not both zero, such that
\begin{equation}
    h\theta_j+a(1-r_j)
    <
    h\theta_k+a(1-r_k),
    \qquad k\neq j.
    \tag{*}
\end{equation}
If $h=0$, a positive deterministic liability already makes regime $j$ strictly active.

Let $h>0$. We construct increasingly severe losses on increasingly rare events. For $p=1$, choose
\[
    \mathbb P(A_t)=e^{-th},
    \qquad
    X_t=-tx\,\mathbf 1_{A_t},
    \qquad
    Y_t=ta\,\mathbf 1_{A_t}.
\]
For sufficiently large $x$, the entropic representation gives
\[
    \frac{1}{t}R_{k,Y_t}(X_t)
    \longrightarrow
    x-a(1-r_k)-h\theta_k.
\]
For $p>1$, the same construction with
\[
    \mathbb P(A_t)=[p(p-1)ht]^{-1/(p-1)}
\]
yields
\[
    \frac{1}{t}R_{k,Y_t}(X_t)
    \longrightarrow
    \sup_{q\in[0,1]}
    \{q[x-a(1-r_k)]-h\theta_kq^p\}.
\]
Choosing
\[
    x>\max_k\{a(1-r_k)+ph\theta_k\}
\]
ensures that the supremum is attained at $q=1$, so that exactly the same limiting expression $x-a(1-r_k)-h\theta_k$ is obtained.

Thus, in both cases, the limiting risk is $x$ minus the separating functional in \textup{(*)}. Hence its value is strictly largest for regime $j$. It follows that regime $j$ is strictly active for all sufficiently large $t$ and therefore cannot be universally redundant. This proves the converse by contraposition. Full details are given in \Cref{app:powerUniversalRedundancy}.
\end{proof}

\begin{remark}
Failure of the geometric condition implies more than non-redundancy: the construction in the proof yields \(Y\in L^\infty_+\) and \(X\in L^\infty\) such that \(R_{j,Y}(X)>\max_{k\neq j}R_{k,Y}(X)\). Thus, regime \(j\) is uniquely active at some asset--liability position. Moreover, holding the remaining regimes fixed, the geometric characterization directly identifies the parameter region in which a given regime is universally redundant.

Atomlessness is needed only for the necessity and strict-activity parts of \Cref{theorem:powerUniversalRedundancy}. The sufficient screening criterion in \Cref{proposition:structuralScreening} remains valid on arbitrary probability spaces, including finite discrete scenario spaces.
\end{remark}

On atomless probability spaces, the dual analysis reveals a qualitative difference between RecAV@R and power-divergence OCEs: universal redundancy reduces to pairwise dominance in the former, whereas it is governed by the joint convex geometry of the recovery regimes in the latter. These results provide structural criteria for screening redundant regimes before optimization. After such structural screening, the remaining nonlinear OCE specifications generally lead to nonlinear portfolio problems, motivating the certified approximation framework developed next.

\section{Approximation and Stability of Recovery OCE Optimization}
\label{sec:approx}

We develop a certified piecewise-linear approximation framework for nonlinear Recovery OCEs under the finite recovery specification \eqref{eq:finiteRecoverySpecification}. OCE localization reduces utility approximation to compact residual intervals, allowing approximation errors to be transferred to the Recovery OCE objective and, ultimately, to the associated portfolio problem. The piecewise-linear structure additionally yields a finite-dimensional linear programming formulation.

\subsection{Piecewise-Linear Approximation and Linear Programming}
General OCE risk measures provide considerable flexibility through the choice of the underlying utility function. To obtain tractable finite-scenario formulations, we approximate the regime-wise utilities by concave piecewise-linear functions that preserve the relevant OCE structure.

For each recovery regime \(j=1,\ldots,l+1\), we replace \(u_{\alpha_j}\) by  a concave piecewise-linear approximation
\(\widehat u_{\alpha_j}\) of the form
\[
    \widehat u_{\alpha_j}(x)
    =
    \min_{i=1,\ldots,n_j}
    \left\{
        a_i^{(j)}x+b_i^{(j)}
    \right\},
    \qquad j=1,\ldots,l+1.
\]
We require \(\widehat u_{\alpha_j}\) to be nondecreasing and normalized by \(\widehat u_{\alpha_j}(0)=0\) and \(1\in\partial\widehat u_{\alpha_j}(0)\), so that \(\widehat u_{\alpha_j}\in\mathcal U_0\). We denote the resulting finite approximating family by \(\widehat{\mathfrak u}:=(\widehat u_{\alpha_j})_{j=1}^{l+1}\). The approximating utilities are not required to be globally ordered; it suffices that their ordering is preserved on the residual intervals relevant to the localized OCE problems below. Throughout this section, we use the notation
\[
\operatorname{Rec}\rho_{\widehat{u},\gamma}(X,Y)
:=
\max_{j=1,\ldots,l+1}
\rho_{\widehat{u}_{\alpha_j}}
\left(X+(1-r_j)Y\right)
\]
for the corresponding finite-regime approximation.

\begin{remark}[Structure-preserving interpolation]
\label{rem:admissible-pwl-approximations}
Such approximations can be constructed by linear interpolation on a common
grid \(G=\{x_0<\cdots<x_N\}\) containing \(0\) as an interior grid point,
followed by affine continuation of the outermost pieces. Concavity and
monotonicity of \(u_{\alpha_j}\) imply that the resulting interpolant is
concave and nondecreasing, while \(u_{\alpha_j}(0)=0\) and
\(1\in\partial u_{\alpha_j}(0)\) imply
\(\widehat u_{\alpha_j}(0)=0\) and
\(1\in\partial\widehat u_{\alpha_j}(0)\). Hence
\(\widehat u_{\alpha_j}\in\mathcal U_0\).

Moreover, common-grid interpolation preserves the ordering of the utility
specifications on the interval \([x_0,x_N]\): if
\(u_{\alpha_1}\leq\cdots\leq u_{\alpha_{l+1}}\), then
\[
\widehat u_{\alpha_1}(x)
\leq\cdots\leq
\widehat u_{\alpha_{l+1}}(x),
\qquad x\in[x_0,x_N].
\]
Concavity further implies
\(\widehat u_{\alpha_j}\leq u_{\alpha_j}\) on \([x_0,x_N]\) for every
\(j\). It therefore suffices below to choose the interpolation interval so
that it contains the residual ranges relevant to the localized OCE problems.
\end{remark}

For the finite-scenario portfolio problem \eqref{eq:finiteRecOCEPortfolioProblem}, the piecewise-linear utility approximations yield a finite-dimensional linear programming formulation. Replacing \(u_{\alpha_j}\) by \(\widehat u_{\alpha_j}\) yields
\begin{equation}
\label{eq:pwl-recoce-portfolio}
\inf_{\mathbf w\in\Delta_K(\mu)}
\max_{j=1,\ldots,l+1}
\rho_{\widehat u_{\alpha_j}}
\bigl(\mathbf A(\mathbf w)-r_j\mathbf L\bigr).
\end{equation}
Continuity of the objective and compactness of \(\Delta_K(\mu)\) ensure existence of an optimal solution.

\begin{proposition}
\label{prop:pwl-portfolio-lp}
The approximating portfolio problem
\eqref{eq:pwl-recoce-portfolio} is equivalent to the linear program
\[
\begin{aligned}
    \min_{\mathbf w,\mathbf m,\mathbf v,T}\quad
        & T
    \\
    \text{\rm s.t.}\quad
        &
        -m_j+\sum_{s=1}^S p_s v_s^{(j)}
        \leq T,
        && j=1,\ldots,l+1,
    \\
        &
        v_s^{(j)}
        \geq
        -a_i^{(j)}
        \left(
            A_s(\mathbf w)-r_jL_s-m_j
        \right)
        -b_i^{(j)},
        &&
        \begin{array}{l}
        i=1,\ldots,n_j,\\
        j=1,\ldots,l+1,\\
        s=1,\ldots,S,
        \end{array}
    \\
        &
        \sum_{s=1}^S p_s\,\mathbf w^\top\mathbf R_s
        \geq\mu,
    \\
        &
        \sum_{k=1}^K w_k=1,
        \qquad
        w_k\geq0,\quad k=1,\ldots,K,
    \\
        &
        \mathbf m\in\mathbb R^{l+1},
        \qquad
        \mathbf v\in\mathbb R^{S\times(l+1)},
        \qquad
        T\in\mathbb R.
\end{aligned}
\]
\end{proposition}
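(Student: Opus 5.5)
We show that the linear program is an epigraph reformulation of \eqref{eq:pwl-recoce-portfolio}, built up in three layers: the scenario-wise utility terms, the scalar OCE variable $m_j$, and the outer maximum over regimes.

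\textbf{Step 1: one regime at a fixed portfolio.} Fix $\mathbf w\in\Delta_K(\mu)$ and $j$. In the finite-scenario model, Definition~\ref{def:oce} gives
\[
\rho_{\widehat u_{\alpha_j}}\bigl(\mathbf A(\mathbf w)-r_j\mathbf L\bigr)
=\inf_{m_j\in\mathbb R}\Bigl\{-m_j+\sum_{s=1}^S p_s\,\bigl(-\widehat u_{\alpha_j}(A_s(\mathbf w)-r_jL_s-m_j)\bigr)\Bigr\}.
\]
Since $\widehat u_{\alpha_j}$ is a minimum of affine functions, $-\widehat u_{\alpha_j}(y)=\max_i\{-a_i^{(j)}y-b_i^{(j)}\}$. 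Hence, for fixed $(\mathbf w,m_j)$, this quantity is the least $v_s^{(j)}$ satisfying the $n_j$ constraints of the LP for scenario $s$. Because $p_s>0$, minimizing $\sum_s p_s v_s^{(j)}$ over the $\mathbf v^{(j)}$-constraints recovers exactly $-\mathbb E[\widehat u_{\alpha_j}(\cdot)]$. This minimum is attained at the pointwise maximum.

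\textbf{Step 2: the outer maximum.} Introduce the epigraph variable $T$ for $\max_j$. The objective of \eqref{eq:pwl-recoce-portfolio} at $\mathbf w$ equals
\[
\inf\Bigl\{T:\ \exists\,\mathbf m,\mathbf v \text{ with } -m_j+\textstyle\sum_s p_s v_s^{(j)}\le T \ \forall j,\ \text{and the $v$-constraints}\Bigr\}.
\]
Both inequalities hold.
\begin{itemize}
\item[($\le$)] Any feasible $(\mathbf m,\mathbf v,T)$ yields $T\ge -m_j+\sum_s p_s v_s^{(j)}\ge -m_j-\mathbb E[\widehat u_{\alpha_j}(\cdots-m_j)]\ge\rho_{\widehat u_{\alpha_j}}(\cdots)$ for each $j$.
\item[($\ge$)] Conversely, choose for each $j$ a near-optimal (or optimal) $m_j$, set $v_s^{(j)}$ to the pointwise maximum, and take $T$ equal to the maximum over $j$.
\end{itemize}
The regimes are coupled only through $T$, so the $m_j$ can be chosen independently. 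This is what allows the $\inf_{m_j}$ inside the $\max_j$ to be interchanged with the joint minimization. The interchange is legitimate because we are minimizing an upper bound $T$ on each separate infimum.

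\textbf{Step 3: attainment and the portfolio constraints.} The remaining constraints are exactly $\mathbf w\in\Delta_K(\mu)$. Minimizing jointly over $\mathbf w$ then gives equality of optimal values and a correspondence of optimal $\mathbf w$, since every constraint is affine in $(\mathbf w,\mathbf m,\mathbf v,T)$ because $A_s(\mathbf w)$ is affine in $\mathbf w$.

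The main obstacle is writing ``$\min$'' rather than ``$\inf$'', which requires the inner infima over $m_j$ to be attained. We will invoke Lemma~\ref{lemma:oce-sup-attained}: $\widehat u_{\alpha_j}$ is finite everywhere, being piecewise linear and real-valued, so the OCE supremum is attained with $m_j$ in the essential range of $\mathbf A(\mathbf w)-r_j\mathbf L$. The existence of an optimal $\mathbf w$ was already noted above from continuity and compactness. Together these give an optimal LP solution whose $(\mathbf w,T)$ components solve \eqref{eq:pwl-recoce-portfolio}. Conversely, the $\mathbf w$-component of any LP optimizer is optimal for \eqref{eq:pwl-recoce-portfolio}, by the inequality ($\le$) of Step 2.
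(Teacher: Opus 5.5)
Your proposal is correct and follows essentially the same argument as the paper. Both write $-\widehat u_{\alpha_j}$ as a maximum of affine functions, invoke the localization lemma to attain the inner minimum over $m_j$, introduce the epigraph variables $v_s^{(j)}$ and $T$, and note that all constraints are affine because $A_s(\mathbf w)$ is affine; you simply spell out the two inequalities and the correspondence between LP optimizers and optimal portfolios, which the paper compresses into the remark that each $v_s^{(j)}$ may be replaced by its smallest feasible value.
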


\begin{proof}
For each regime \(j\), the OCE definition, \( -\widehat u_{\alpha_j}(x) = \max_{i=1,\ldots,n_j} \{-a_i^{(j)}x-b_i^{(j)}\} \), and the \Cref{lemma:oce-sup-attained}  yield
\[
\rho_{\widehat u_{\alpha_j}}
\bigl(\mathbf A(\mathbf w)-r_j\mathbf L\bigr)
=
\min_{m_j\in\mathbb R}
\left\{
-m_j+
\sum_{s=1}^S p_s
\max_{i=1,\ldots,n_j}
\left[
-a_i^{(j)}
\bigl(A_s(\mathbf w)-r_jL_s-m_j\bigr)
-b_i^{(j)}
\right]
\right\}.
\]
Introducing \(v_s^{(j)}\) as epigraph variables for the scenario-wise maxima and \(T\) as an epigraph variable for the maximum across recovery regimes gives the stated formulation. For fixed $\mathbf w$ and $\mathbf m$, replacing each $v_s^{(j)}$ by its smallest feasible value preserves feasibility and leaves the objective unchanged. Hence the resulting epigraph reformulation is exact. Since \(A_s(\mathbf w)=b(1+\mathbf w^\top\mathbf R_s)\) is affine in \(\mathbf w\), all constraints are affine.
\end{proof}

\subsection{Approximation Guarantees for Recovery OCE Portfolio Optimization}
\label{sec:approximation-guarantees}

The preceding reformulation establishes the computational tractability of the piecewise-linear approximation. We now quantify how approximation errors in the utility functions propagate to Recovery OCE risk measures and the associated portfolio problems.  We first derive bounds and error estimates for fixed positions and then extend them uniformly over the feasible portfolio set to obtain convergence of optimal values and stability of optimal solutions.

\begin{lemma}[Sandwich bounds]
\label{lemma:recoce-sandwich}
Let \(\gamma\) be the finite recovery specification introduced in \eqref{eq:finiteRecoverySpecification}, and let \(\mathfrak u^-=(u^-_{\alpha_j})_{j=1}^{l+1}\), \(\mathfrak u=(u_{\alpha_j})_{j=1}^{l+1}\), and \(\mathfrak u^+=(u^+_{\alpha_j})_{j=1}^{l+1}\) be utility specifications in \(\mathcal U_0\) such that
\[
    u^-_{\alpha_j}(x)
    \leq u_{\alpha_j}(x)
    \leq u^+_{\alpha_j}(x),
    \qquad
    x\in\mathbb R,\quad j=1,\ldots,l+1.
\]
If the associated OCEs are finite-valued on \(L^\infty\), then, for every \(X\in L^\infty\) and \(Y\in L^\infty_+\),
\[
    \operatorname{Rec}\rho_{\mathfrak u^+,\gamma}(X,Y)
    \leq
    \operatorname{Rec}\rho_{\mathfrak u,\gamma}(X,Y)
    \leq
    \operatorname{Rec}\rho_{\mathfrak u^-,\gamma}(X,Y).
\]
\end{lemma}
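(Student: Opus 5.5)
The plan is to reduce the statement to monotonicity of the OCE in its generating utility, applied regime by regime, and then pass this through the finite maximum of \Cref{prop:finite-recovery-representation}. The first step is to fix \(j\in\{1,\ldots,l+1\}\) and a position \(Z\in L^\infty\). Since \(u^-_{\alpha_j}\leq u_{\alpha_j}\leq u^+_{\alpha_j}\) pointwise on \(\mathbb R\), we have, for every \(m\in\mathbb R\),
\[
m+\mathbb E\bigl[u^-_{\alpha_j}(Z-m)\bigr]
\leq
m+\mathbb E\bigl[u_{\alpha_j}(Z-m)\bigr]
\leq
m+\mathbb E\bigl[u^+_{\alpha_j}(Z-m)\bigr].
\]
These expectations are well defined in \([-\infty,\infty)\), because each utility is bounded above by the identity, so \(u(Z-m)\leq Z-m\) and the positive parts are bounded.

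The second step is to take the supremum over \(m\in\mathbb R\). This gives \(S_{u^-_{\alpha_j}}(Z)\leq S_{u_{\alpha_j}}(Z)\leq S_{u^+_{\alpha_j}}(Z)\). All three values are finite by the finite-valuedness assumption, so negating yields
\[
\rho_{u^+_{\alpha_j}}(Z)\leq\rho_{u_{\alpha_j}}(Z)\leq\rho_{u^-_{\alpha_j}}(Z).
\]
This is exactly the order reversal already noted after the definition of a pointwise nondecreasing family. Note that no localization via \Cref{lemma:oce-sup-attained} is needed here, since the inequalities hold for every \(m\) before the supremum is taken.

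The third step is to apply this with \(Z=X+(1-r_j)Y\) for each \(j\), and then take the maximum over \(j=1,\ldots,l+1\). The maximum is monotone in each entry, so it preserves the inequalities. \Cref{prop:finite-recovery-representation}, which uses \(Y\in L^\infty_+\), identifies each maximum with the corresponding \(\operatorname{Rec}\rho\). For \(\mathfrak u^\pm\) I would read \(\operatorname{Rec}\rho_{\mathfrak u^\pm,\gamma}\) as this finite maximum, in line with the notation introduced for \(\widehat{\mathfrak u}\). If one insists on the supremum definition, the proof of \Cref{prop:finite-recovery-representation} applies verbatim, because it only uses monotonicity of each \(\rho_{u^\pm_{\alpha_j}}\) and not the ordering across regimes.

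I do not expect a genuine obstacle. The only point needing care is the third step: the families \(\mathfrak u^\pm\) need not be ordered in \(j\), so one should check that the reduction to the finite maximum does not rely on that ordering. It does not, since it uses only monotonicity within each regime.
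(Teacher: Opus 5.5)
Your proposal is correct and follows the paper's proof: order the regime-wise OCE risk measures via the pointwise utility ordering, apply this to $X+(1-r_j)Y$, and pass the inequalities through the maximum over $j$. Your extra remarks are sound and fill in details the paper leaves implicit: the expectations are well defined, and the finite-maximum representation does not need the families $\mathfrak u^\pm$ to be ordered across regimes.
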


\begin{proof}
The ordering of the utility functions implies
\(\rho_{u^+_{\alpha_j}}(Z)\leq\rho_{u_{\alpha_j}}(Z)
\leq\rho_{u^-_{\alpha_j}}(Z)\) for every \(Z\in L^\infty\).
Applying this to \(Z=X+(1-r_j)Y\) and taking the maximum over
\(j=1,\ldots,l+1\) yields the claim.
\end{proof}

Whenever these one-sided approximations hold globally, secant interpolants provide lower utility approximations and hence upper bounds on the Recovery OCE, whereas the pointwise minimum of supporting affine functions provides upper utility approximations and hence lower bounds.

For a fixed position, global approximation is not necessary. By localization of the OCE optimization, it suffices to control the utilities on suitable compact residual ranges.  Let \(X\in L^\infty\), \(Y\in L^\infty_+\), and, for \(j=1,\ldots,l+1\), define
\[
    Z_j:=X+(1-r_j)Y,\qquad
    z_j^-:=\operatorname{ess\,inf}Z_j,\qquad
    z_j^+:=\operatorname{ess\,sup}Z_j.
\]
By \Cref{lemma:oce-sup-attained}, the OCE optimization at recovery level \(r_j\) can be restricted to \(m\in[z_j^-,z_j^+]\). Hence, all relevant residuals
\(Z_j-m\) lie in
\[
    I_j:=[z_j^- - z_j^+,\,z_j^+-z_j^-].
\]

\begin{lemma}[Error propagation]
\label{lemma:recoce-error-propagation}
Let \(\mathfrak u=(u_{\alpha_j})_{j=1}^{l+1}\) and
\(\widehat{\mathfrak u}=(\widehat u_{\alpha_j})_{j=1}^{l+1}\)
be utility specifications in \(\mathcal U_0\). Suppose that
\(u_{\alpha_j}\) and \(\widehat u_{\alpha_j}\) are finite on \(I_j\), and set
\[
    \varepsilon_j
    :=
    \sup_{x\in I_j}
    |u_{\alpha_j}(x)-\widehat u_{\alpha_j}(x)|,
    \qquad j=1,\ldots,l+1.
\]
Then
\[
    \left|
        \operatorname{Rec}\rho_{\mathfrak u,\gamma}(X,Y)
        -
        \operatorname{Rec}\rho_{\widehat{\mathfrak u},\gamma}(X,Y)
    \right|
    \leq
    \max_{j=1,\ldots,l+1}\varepsilon_j.
\]
\end{lemma}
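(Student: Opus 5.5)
The plan is to reduce the claim to a regime-wise estimate and then use that a maximum of finitely many numbers is $1$-Lipschitz in the sup-norm. Concretely, I will show for each fixed $j$ that
\[
\bigl|\rho_{u_{\alpha_j}}(Z_j)-\rho_{\widehat u_{\alpha_j}}(Z_j)\bigr|\leq\varepsilon_j,
\qquad Z_j=X+(1-r_j)Y .
\]
Given this, the elementary inequality $|\max_j a_j-\max_j b_j|\leq\max_j|a_j-b_j|$ together with the definition of $\operatorname{Rec}\rho$ in the finite-regime form of \Cref{prop:finite-recovery-representation} yields the bound $\max_j\varepsilon_j$. For the approximating family I use the notation fixed at the beginning of this section; the finite representation also holds there, since it relies only on monotonicity.

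For the regime-wise estimate, fix $j$ and note that $u_{\alpha_j}$ and $\widehat u_{\alpha_j}$ are finite on $I_j=[z_j^--z_j^+,\,z_j^+-z_j^-]$. This is precisely the hypothesis of \Cref{lemma:oce-sup-attained} applied to $Z_j$. Hence both OCEs can be written as maxima over the same compact set $m\in[z_j^-,z_j^+]$:
\[
S_{u}(Z_j)=\max_{m\in[z_j^-,z_j^+]}\{m+\mathbb E[u(Z_j-m)]\},
\]
and analogously for $\widehat u$. For every such $m$, the residual $Z_j-m$ lies in $I_j$ almost surely. Therefore $|u_{\alpha_j}(Z_j-m)-\widehat u_{\alpha_j}(Z_j-m)|\leq\varepsilon_j$ a.s., and taking expectations, the two objective functions differ by at most $\varepsilon_j$ uniformly in $m\in[z_j^-,z_j^+]$.

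Finally, a supremum over a common index set is $1$-Lipschitz with respect to uniform perturbations. This gives $|S_{u_{\alpha_j}}(Z_j)-S_{\widehat u_{\alpha_j}}(Z_j)|\leq\varepsilon_j$, and since $\rho=-S$, the same bound holds for the risk measures.

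The only delicate point is ensuring that the localization applies to both utilities simultaneously with the same interval of $m$. This is exactly why finiteness on $I_j$ is required for both $u_{\alpha_j}$ and $\widehat u_{\alpha_j}$. Without it, one OCE could attain its supremum outside $[z_j^-,z_j^+]$, where the residuals leave $I_j$ and the error $\varepsilon_j$ no longer controls the difference. Measurability and integrability are harmless because $Z_j\in L^\infty$ and both utilities are finite and concave, hence bounded, on the compact interval $I_j$.
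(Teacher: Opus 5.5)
Your proof is correct and follows essentially the same route as the paper: you localize both OCEs to $m\in[z_j^-,z_j^+]$ via \Cref{lemma:oce-sup-attained}, bound the difference of the objectives by $\varepsilon_j$ uniformly in $m$, and conclude with $|\max_j a_j-\max_j b_j|\leq\max_j|a_j-b_j|$. One minor correction to your closing remark: the reduction of the supremum to $[z_j^-,z_j^+]$ in that lemma holds for every $u\in\mathcal U_0$, so the supremum can never escape the interval; finiteness on $I_j$ is needed for attainment, for finite objective values, and for $\varepsilon_j$ to be meaningful.
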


\begin{proof}
By \Cref{lemma:oce-sup-attained}, both OCE optimizations in regime \(j\) can be restricted to \(m\in[z_j^-,z_j^+]\), where \(Z_j-m\in I_j\). Therefore,
\[
    \left|
        \rho_{u_{\alpha_j}}(Z_j)
        -
        \rho_{\widehat u_{\alpha_j}}(Z_j)
    \right|
    \leq
    \sup_{m\in[z_j^-,z_j^+]}
    \left|
        \mathbb E[u_{\alpha_j}(Z_j-m)]
        -
        \mathbb E[\widehat u_{\alpha_j}(Z_j-m)]
    \right|
    \leq\varepsilon_j.
\]
Taking the maximum over the recovery regimes and using
\(
|\max_j a_j-\max_j b_j|\leq\max_j|a_j-b_j|
\)
yields the claim.
\end{proof}

Thus, approximation rates for the utilities on the relevant residual ranges transfer directly, without loss of order, to the Recovery OCE.

\begin{corollary}[Approximation rates]
\label{cor:recoce-approximation-rates}
Suppose that the utilities \(u_{\alpha_j}\) are approximated by the piecewise-linear secant interpolants from \Cref{rem:admissible-pwl-approximations} on a common grid \(x_0<\cdots<x_N\) with mesh size \(h:=\max_{k=1,\ldots,N}(x_k-x_{k-1})\), whose interpolation interval \(I=[x_0,x_N]\) contains all residual ranges \(I_j\).

If \(u_{\alpha_j}\in C^2(I)\) for all \(j\) and \(\max_j\|u_{\alpha_j}''\|_{\infty,I}\leq M\), then
\[
    \left|
        \operatorname{Rec}\rho_{\mathfrak u,\gamma}(X,Y)
        -
        \operatorname{Rec}\rho_{\widehat{\mathfrak u},\gamma}(X,Y)
    \right|
    \leq
    \frac{M}{8}h^2.
\]
If, instead, the utilities are Lipschitz continuous on \(I\) with Lipschitz constants uniformly bounded by \(L\), then
\[
    \left|
        \operatorname{Rec}\rho_{\mathfrak u,\gamma}(X,Y)
        -
        \operatorname{Rec}\rho_{\widehat{\mathfrak u},\gamma}(X,Y)
    \right|
    \leq
    \frac{L}{2}h.
\]
Thus, the Recovery OCE approximation converges with order \(O(h^2)\) and \(O(h)\), respectively.
\end{corollary}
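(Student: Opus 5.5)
The plan is to apply \Cref{lemma:recoce-error-propagation}, which reduces the claim to bounding the utility interpolation errors
\[
\varepsilon_j=\sup_{x\in I_j}|u_{\alpha_j}(x)-\widehat u_{\alpha_j}(x)|
\]
uniformly in \(j\). Since every \(I_j\subseteq I=[x_0,x_N]\), it suffices to show that \(\sup_{x\in I}|u_{\alpha_j}(x)-\widehat u_{\alpha_j}(x)|\) is at most \(\tfrac{M}{8}h^2\), respectively \(\tfrac{L}{2}h\). On \(I\), the interpolant of \Cref{rem:admissible-pwl-approximations} coincides with the secant interpolant, because the affine continuation only affects points outside \([x_0,x_N]\). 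Finiteness of \(u_{\alpha_j}\) and \(\widehat u_{\alpha_j}\) on \(I_j\) is automatic under either regularity assumption, so the hypotheses of \Cref{lemma:recoce-error-propagation} hold.

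The bound reduces to a single grid cell \([x_{k-1},x_k]\) of length \(h_k\leq h\), on which \(\widehat u_{\alpha_j}\) is the linear interpolant of \(u:=u_{\alpha_j}\). In the \(C^2\) case, I would use the classical interpolation remainder. For \(x\) in the cell there is \(\xi\) in the cell with
\[
u(x)-\widehat u(x)=\tfrac12 u''(\xi)(x-x_{k-1})(x-x_k).
\]
If one prefers to avoid the pointwise mean-value form, the same bound follows from applying Rolle's theorem twice to \(e(t)=u(t)-\widehat u(t)-c\,(t-x_{k-1})(t-x_k)\). Since
\[
|(x-x_{k-1})(x-x_k)|\leq h_k^2/4,
\]
this gives \(|u-\widehat u|\leq \tfrac{M}{8}h_k^2\leq\tfrac M8 h^2\).

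In the Lipschitz case, write \(\widehat u(x)=\lambda u(x_{k-1})+(1-\lambda)u(x_k)\) with \(\lambda=(x_k-x)/h_k\). Then
\[
|u(x)-\widehat u(x)|\leq \lambda L(x-x_{k-1})+(1-\lambda)L(x_k-x)=\tfrac{2L}{h_k}(x-x_{k-1})(x_k-x)\leq \tfrac{L}{2}h_k.
\]
The maximum of \((x-x_{k-1})(x_k-x)\) is \(h_k^2/4\), attained at the midpoint, which gives exactly the factor \(\tfrac L2\).

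Taking the supremum over cells and the maximum over \(j\), and inserting the result into \Cref{lemma:recoce-error-propagation}, yields both inequalities and hence the stated \(O(h^2)\) and \(O(h)\) rates. The only delicate point is the bookkeeping that the localized ranges \(I_j\) lie inside the interpolation interval, so that extrapolated pieces never enter. This holds by hypothesis.
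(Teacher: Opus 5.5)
Your proposal is correct and follows essentially the same route as the paper. You reduce via \Cref{lemma:recoce-error-propagation} to a uniform interpolation bound on $I\supseteq I_j$, handle the $C^2$ case with the classical linear-interpolation remainder (which the paper simply cites), and use the same convex-combination estimate $2L\,t(1-t)h_k\le \tfrac{L}{2}h$ in the Lipschitz case.
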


\begin{proof}
For the \(C^2\) case, the standard error bound for linear interpolation \cite{atkinson2008introduction} gives
\[
    \|u_{\alpha_j}-\widehat u_{\alpha_j}\|_{\infty,I}
    \leq
    \frac{M}{8}h^2,
    \qquad j=1,\ldots,l+1.
\]
For the Lipschitz case, if \(x=(1-t)x_{k-1}+tx_k\) on a grid interval,
then
\[
\begin{aligned}
    |u_{\alpha_j}(x)-\widehat u_{\alpha_j}(x)|
    &\leq
    L(1-t)(x-x_{k-1})+Lt(x_k-x) \\
    &=2Lt(1-t)(x_k-x_{k-1})
    \leq \frac{L}{2}h.
\end{aligned}
\]
Both claims now follow from \Cref{lemma:recoce-error-propagation}.
\end{proof}

\begin{remark}[Adaptive refinement]
\label{rem:adaptive-refinement}
The sandwich bounds of \Cref{lemma:recoce-sandwich} suggest a simple adaptive refinement strategy. Suppose that, for each recovery regime \(j\), lower and upper approximations yield bounds \(\underline R_j\leq\rho_{u_{\alpha_j}}(Z_j)\leq\overline R_j\). Combining these gives \(\max_j\underline R_j \leq\operatorname{Rec}\rho_{\mathfrak u,\gamma}(X,Y) \leq\max_j\overline R_j\). In particular, any regime \(j\) satisfying
\[
    \overline R_j<\max_k\underline R_k
\]
cannot attain the maximal regime-wise risk and may therefore be excluded from further refinement. The approximations need only be refined for regimes whose certified intervals still overlap the current lower bound for the maximum, thereby tightening the bounds where they can affect the Recovery OCE. This rule is position-specific and should not be confused with the universal regime redundancy studied in \Cref{sec:redundancy}.
\end{remark}

We now extend the preceding error bounds uniformly over the entire feasible portfolio set. Let \(\widehat{\mathfrak u}_n =(\widehat u_{\alpha_j,n})_{j=1}^{l+1}\), \(n\in\mathbb N\), be a sequence of piecewise-linear approximations of \(\mathfrak u=(u_{\alpha_j})_{j=1}^{l+1}\), and define
\[
    F(\mathbf w)
    :=
    \operatorname{Rec}\rho_{\mathfrak u,\gamma}
    (\mathbf A(\mathbf w)-\mathbf L,\mathbf L),
    \qquad
    F_n(\mathbf w)
    :=
    \operatorname{Rec}\rho_{\widehat{\mathfrak u}_n,\gamma}
    (\mathbf A(\mathbf w)-\mathbf L,\mathbf L),
\]
for \(\mathbf w\in\Delta_K(\mu)\).

For each recovery regime \(j\), let \(\mathbf Z_j(\mathbf w):=\mathbf A(\mathbf w)-r_j\mathbf L\), \(\mathbf w\in\Delta_K(\mu)\), denote the regime-wise residual vector in \eqref{eq:finiteRecOCEPortfolioProblem}, with scenario components \(Z_{j,s}(\mathbf w)\), \(s=1,\ldots,S\). Set
\[
    \underline z_j
    :=
    \min_{\mathbf w\in\Delta_K(\mu)}
    \min_{s=1,\ldots,S} Z_{j,s}(\mathbf w),
    \qquad
    \overline z_j
    :=
    \max_{\mathbf w\in\Delta_K(\mu)}
    \max_{s=1,\ldots,S} Z_{j,s}(\mathbf w).
\]
Since \(\Delta_K(\mu)\) is nonempty and compact and \(Z_{j,s}\) is continuous in \(\mathbf w\), these extrema are finite and attained. By \Cref{lemma:oce-sup-attained}, all residual values arising in the localized OCE problems are contained, uniformly over \(\mathbf w\in\Delta_K(\mu)\), in the compact interval
\[
    I_j^{\mathrm{port}}
    :=
    [\underline z_j-\overline z_j,\,
     \overline z_j-\underline z_j].
\]

\begin{theorem}[Stability under utility approximation]
\label{theorem:portfolio-stability}
Suppose that \(u_{\alpha_j}\) and \(\widehat u_{\alpha_j,n}\) are finite on
\(I_j^{\mathrm{port}}\) for all \(j\) and \(n\), and set
\[
    \varepsilon_n
    :=
    \max_{j=1,\ldots,l+1}
    \sup_{x\in I_j^{\mathrm{port}}}
    |u_{\alpha_j}(x)-\widehat u_{\alpha_j,n}(x)|.
\]
If \(\varepsilon_n\to0\), then the following statements hold.

\begin{enumerate}
    \item[(i)] \emph{Uniform convergence of the portfolio objectives.}
    For every \(n\in\mathbb N\), the uniform error satisfies
    \(\sup_{\mathbf w\in\Delta_K(\mu)}
    |F_n(\mathbf w)-F(\mathbf w)|\leq\varepsilon_n\).
    In particular, \(F_n\to F\) uniformly on \(\Delta_K(\mu)\).

    \item[(ii)] \emph{Stability of the optimal values.}
    The functions \(F\) and \(F_n\) attain their minima on
    \(\Delta_K(\mu)\). Setting
    \(V:=\min_{\mathbf w\in\Delta_K(\mu)}F(\mathbf w)\) and
    \(V_n:=\min_{\mathbf w\in\Delta_K(\mu)}F_n(\mathbf w)\), we have
    \(|V_n-V|\leq\varepsilon_n\), and hence \(V_n\to V\).

    \item[(iii)] \emph{Stability of optimal solutions.}
    If
    \(\mathbf w_n\in
    \operatorname*{arg\,min}_{\mathbf w\in\Delta_K(\mu)}F_n(\mathbf w)\),
    then every accumulation point of \((\mathbf w_n)_{n\in\mathbb N}\)
    belongs to
    \(\operatorname*{arg\,min}_{\mathbf w\in\Delta_K(\mu)}F(\mathbf w)\).
    If the exact problem has a unique minimizer \(\mathbf w^\star\), then
    \(\mathbf w_n\to\mathbf w^\star\).
\end{enumerate}
\end{theorem}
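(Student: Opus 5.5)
The argument runs in three steps matching (i)--(iii): a uniform version of \Cref{lemma:recoce-error-propagation}, then a standard min-perturbation bound, then a compactness argument.

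\emph{Step (i).} Fix \(\mathbf w\in\Delta_K(\mu)\). In the finite-scenario model take \(X=\mathbf A(\mathbf w)-\mathbf L\) and \(Y=\mathbf L\in\mathbb R_+^S\). Then \(Z_j=X+(1-r_j)Y=\mathbf Z_j(\mathbf w)\). Its essential range is contained in \([\underline z_j,\overline z_j]\), so the position-specific residual interval satisfies \(I_j\subseteq I_j^{\mathrm{port}}\). Consequently the regime-wise errors \(\varepsilon_j\) of \Cref{lemma:recoce-error-propagation} for this position are bounded by \(\varepsilon_n\). The finiteness hypothesis on \(I_j^{\mathrm{port}}\) implies finiteness on \(I_j\), which is what \Cref{lemma:oce-sup-attained} needs to localize \(m\) to \([z_j^-,z_j^+]\) for both \(u_{\alpha_j}\) and \(\widehat u_{\alpha_j,n}\). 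The lemma then gives \(|F_n(\mathbf w)-F(\mathbf w)|\le\varepsilon_n\). The bound is independent of \(\mathbf w\), so taking the supremum yields (i).

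\emph{Step (ii).} I first need existence of minimizers, i.e.\ continuity of \(F\) and \(F_n\) on the compact set \(\Delta_K(\mu)\). Each \(\rho_u\) is a finite convex monetary risk measure on \(\mathbb R^S\), hence \(1\)-Lipschitz in the sup norm by cash invariance and monotonicity. The map \(\mathbf w\mapsto\mathbf Z_j(\mathbf w)\) is affine, so each regime term is continuous, and a finite maximum of continuous functions is continuous. Weierstrass then gives attainment. For the value bound, take minimizers \(\mathbf w^\star\) of \(F\) and \(\mathbf w_n\) of \(F_n\). Then
\[
V_n\le F_n(\mathbf w^\star)\le F(\mathbf w^\star)+\varepsilon_n=V+\varepsilon_n,
\qquad
V\le F(\mathbf w_n)\le F_n(\mathbf w_n)+\varepsilon_n=V_n+\varepsilon_n,
\]
so \(|V_n-V|\le\varepsilon_n\to0\).

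\emph{Step (iii).} Let \(\mathbf w_{n_k}\to\bar{\mathbf w}\); the limit lies in \(\Delta_K(\mu)\) because the set is closed. Then
\[
F(\bar{\mathbf w})=\lim_k F(\mathbf w_{n_k})\le\lim_k\bigl(F_{n_k}(\mathbf w_{n_k})+\varepsilon_{n_k}\bigr)=\lim_k\bigl(V_{n_k}+\varepsilon_{n_k}\bigr)=V,
\]
using continuity of \(F\) and (ii). Hence \(\bar{\mathbf w}\) is a minimizer of \(F\). If the minimizer \(\mathbf w^\star\) is unique, every subsequence of \((\mathbf w_n)\) has, by compactness, a further subsequence converging to \(\mathbf w^\star\), so the whole sequence converges to \(\mathbf w^\star\).

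The only step needing care is (i): checking that the localization in \Cref{lemma:oce-sup-attained} applies for every \(\mathbf w\) at once. Specifically, one must confirm \(I_j(\mathbf w)\subseteq I_j^{\mathrm{port}}\) and that finiteness of \(\widehat u_{\alpha_j,n}\) there suffices, even though the approximants are not globally ordered. Continuity of \(F_n\) also needs the Lipschitz property of finite OCEs, which I would state explicitly.
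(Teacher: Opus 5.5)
Your proposal is correct and follows the paper's proof essentially step for step. Part (i) uses the inclusion $I_j(\mathbf w)\subseteq I_j^{\mathrm{port}}$ together with \Cref{lemma:recoce-error-propagation}, part (ii) uses $1$-Lipschitz continuity of finite monetary risk measures, compactness and the two-sided comparison $V_n\le V+\varepsilon_n$, $V\le V_n+\varepsilon_n$, and part (iii) is the subsequence-plus-continuity argument, with uniqueness giving convergence of the whole sequence.
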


\begin{proof}
    The proof is given in \Cref{app:portfolio-stability}.
\end{proof}

In particular, for the secant approximations of \Cref{cor:recoce-approximation-rates} on a common compact interval \(I\) containing all \(I_j^{\mathrm{port}}\), with mesh sizes \(h_n\to0\), the optimal values inherit the same convergence rates: \(|V_n-V|\leq Mh_n^2/8\) under the \(C^2\)-conditions and \(|V_n-V|\leq Lh_n/2\) under the Lipschitz conditions.

\begin{remark}[Extension to distributional ambiguity]
The preceding approximation arguments extend naturally to a distributionally robust setting; see, e.g., \cite{zhu2009worst,delage2010distributionally,mohajerin2018data} for established approaches to distributionally robust risk and portfolio optimization. Let \(\mathcal P\) be a nonempty ambiguity set of probability measures and define
\[
    \operatorname{Rec}\rho_{\mathfrak u,\gamma}^{\mathcal P}(X,Y)
    :=
    \sup_{P\in\mathcal P}
    \operatorname{Rec}\rho_{\mathfrak u,\gamma}^{P}(X,Y).
\]
If the utility-approximation error bounds hold uniformly over
\(P\in\mathcal P\), then
\[
    \left|
        \operatorname{Rec}\rho_{\mathfrak u,\gamma}^{\mathcal P}(X,Y)
        -
        \operatorname{Rec}\rho_{\widehat{\mathfrak u},\gamma}^{\mathcal P}(X,Y)
    \right|
    \leq
    \max_j\varepsilon_j.
\]
Thus, taking the worst case over the ambiguity set does not amplify the
utility-approximation error. Consequently, the preceding uniform convergence
and optimal-value bounds extend whenever the corresponding approximation
estimates hold uniformly over both the ambiguity set and the feasible
portfolios. Stability of optimal solutions then follows under the same
compactness and continuity conditions as above.

Such uniform control is immediate when the utility approximation is valid on
a common compact residual range containing all relevant residuals under every
\(P\in\mathcal P\). If no such common compact range exists, additional global
approximation or suitable growth and integrability conditions are required.
\end{remark}

Taken together, the results of this section show that piecewise-linear approximation provides a tractable and certified framework for Recovery OCE portfolio optimization. Approximation errors in the utility functions can be controlled on compact residual ranges and transferred directly to the Recovery OCE objective, yielding explicit error bounds and convergence rates as well as stability guarantees for the associated portfolio problems. In particular, approximate optimal values converge to the exact optimal value, while accumulation points of approximate optimal solutions solve the exact problem; if the exact optimizer is unique, the approximate optimal solutions converge to it.

\section{Numerical Study: Recovery Requirements in Asset--Liability
Portfolio Optimization}

This section illustrates the theoretical and computational implications of Recovery OCE risk measures in a finite-scenario asset--liability portfolio problem. We first compare standard AV@R with RecAV@R to assess the impact of recovery requirements on optimal portfolio allocations and to identify the recovery regimes that are active along the efficient frontier, complementing the structural analysis of \Cref{sec:dual}. For the entropic specification, we then examine the accuracy of the piecewise-linear approximation and the convergence and stability of the resulting portfolio problem as the approximation grid is refined, illustrating the results of \Cref{sec:approx}.

\subsection{Model Setup}

The numerical study considers an asset--liability portfolio problem with a diversified investment universe comprising the iShares MSCI World ETF (URTH), the Vanguard Real Estate ETF (VNQ), the iShares MSCI Global Min Vol Factor ETF (ACWV), the iShares 7--10 Year Treasury Bond ETF (IEF), and the iShares 1--3 Year Treasury Bond ETF (SHY). The assets provide a parsimonious set of equity, real-estate, and fixed-income exposures with distinct risk characteristics.

Daily adjusted closing prices are obtained from Yahoo Finance and converted to monthly log returns using the last available adjusted closing price in each calendar month. The resulting sample covers February 2012 through December 2021. Adjusted prices are used to account for distributions and corporate actions reflected in the Yahoo Finance data.

We generate \(1{,}000\) one-year asset-return scenarios by sampling twelve monthly return vectors independently with replacement from the historical sample. Within each sampled month, the same historical observation is used across all assets, thereby preserving the contemporaneous cross-sectional dependence of asset returns. The twelve sampled monthly log returns are summed and subsequently transformed into annual simple returns. This yields an empirical one-year return distribution while abstracting from serial dependence in monthly returns. The bootstrap scenarios were generated using MATLAB's pseudorandom number generator with random seed~1. All computations were performed in MATLAB R2025b.

Throughout the numerical study, initial wealth is normalized to $b=1$, so that all asset and liability values are expressed relative to initial wealth. To introduce dependence between asset returns and liabilities, the liability is linked to the simulated annual performance of the broad equity market.
Let \(R^{\mathrm{World}}_s\) denote the one-year simple return of URTH in scenario \(s\), and let \(q_{0.05}^{\mathrm{World}}\) denote its empirical \(5\%\)-quantile across the simulated scenarios. The scenario-dependent liability is defined by
\[
    L_s
    =
    \begin{cases}
        0.9,
        & R^{\mathrm{World}}_s
          \leq q_{0.05}^{\mathrm{World}},\\
        0.6,
        & R^{\mathrm{World}}_s
          > q_{0.05}^{\mathrm{World}}.
    \end{cases}
\]
Thus, the liability increases by \(50\%\) in scenarios associated with the lowest simulated annual returns of the broad equity-market proxy. This stylized specification introduces adverse dependence between asset-market conditions and liabilities while keeping the asset--liability structure transparent.

The parameter choices used throughout the numerical study are illustrative rather than calibrated and are not intended to represent a particular regulatory or empirical specification. They are chosen to produce distinct risk assessments across the recovery regimes, allowing the interaction between recovery requirements, risk preferences, and optimal portfolio allocations to be examined. In applications, the recovery levels and corresponding risk-preference parameters would have to be selected according to the decision maker’s requirements and preferences.

\subsection{Portfolio Effects and Active Recovery Regimes}
\label{subsec:numerical_avar}

We first examine the portfolio implications of recovery requirements by comparing standard AV@R with RecAV@R under identical asset--liability scenarios and portfolio constraints. Particular attention is given to the recovery regimes that become active along the RecAV@R-efficient frontier and to the associated changes in optimal portfolio allocations.

For the AV@R specification, we use $\operatorname{AV@R}_{0.10}$ as the benchmark, while the corresponding RecAV@R specification comprises the three recovery regimes
\[
    (r_j,\alpha_j)
    \in
    \bigl\{
        (0.80,0.02),
        (0.90,0.06),
        (1.00,0.10)
    \bigr\}.
\]
Accordingly, the Recovery OCE portfolio problem \eqref{eq:finiteRecOCEPortfolioProblem} specializes to
\[
    \inf_{\mathbf w\in\Delta_K(\mu)}
    \max_{j=1,2,3}
    \operatorname{AV@R}_{\alpha_j}
    \bigl(\mathbf A(\mathbf w)-r_j\mathbf L\bigr).
\]
The full-recovery regime $(r_3,\alpha_3)=(1,0.10)$ coincides with the standard AV@R benchmark. The recovery specification therefore retains the benchmark requirement while imposing additional requirements at the lower recovery levels $r=0.9$ and $r=0.8$.

\Cref{fig:avar_frontiers} compares the RecAV@R-efficient frontier with the standard AV@R frontier and the frontiers obtained by optimizing the individual recovery regimes separately. While the latter provide regime-specific benchmarks, the RecAV@R frontier results from the joint minimax problem, requiring a single portfolio to account for all recovery regimes simultaneously. In the present numerical setting, the RecAV@R frontier coincides, up to numerical precision, with the upper envelope of the regime-specific efficient frontiers. As the target return increases, different regime-specific frontiers form this upper envelope, indicating that the recovery requirement relevant for the joint problem changes along the efficient frontier. The active regimes at the RecAV@R-optimal portfolios are examined directly below.

To identify the recovery requirements that actually determine the RecAV@R objective at the optimal portfolio, define the regime-specific risk at
portfolio $\mathbf w$ by
\[
    R_j(\mathbf w)
    :=
    \operatorname{AV@R}_{\alpha_j}
    \bigl(\mathbf A(\mathbf w)-r_j\mathbf L\bigr),
    \qquad j=1,2,3.
\]
For a RecAV@R-optimal portfolio $\mathbf w^\star(\mu)$, the corresponding active set is
\[
    \mathcal J\bigl(\mathbf w^\star(\mu)\bigr)
    :=
    \left\{
        j:
        R_j\bigl(\mathbf w^\star(\mu)\bigr)
        =
        \max_{k=1,2,3}
        R_k\bigl(\mathbf w^\star(\mu)\bigr)
    \right\}.
\]
The active set need not be a singleton, as several recovery regimes may simultaneously attain the maximal risk at the optimal portfolio. In the numerical implementation, regime activity is identified up to a tolerance $\varepsilon=10^{-7}$.

\Cref{fig:avar_allocations} illustrates the associated portfolio effects and active recovery regimes. Panel~(a) shows the optimal allocations under the standard $\operatorname{AV@R}_{0.10}$ benchmark, while panel~(b) shows the RecAV@R-optimal allocations for the same target returns. The horizontal segments above panel~(b) indicate the recovery regimes contained in $\mathcal J(\mathbf w^\star(\mu))$. Overlapping segments thus identify target-return ranges in which multiple recovery regimes are simultaneously active.

At lower target returns, the full-recovery regime $r=1$ is active, with the intermediate recovery regime $r=0.9$ becoming simultaneously active over part of this range. The RecAV@R allocation initially remains close to the standard AV@R allocation. At higher target returns, the lower-recovery regime $r=0.8$ becomes active. The overlapping activity regions show that several recovery requirements may jointly determine the objective at the optimal portfolio. Overall, the comparison illustrates that additional recovery requirements can materially affect optimal portfolio allocations even though the standard AV@R requirement is retained as the full-recovery regime.

It is important to distinguish the regime activity observed along the efficient frontier from the global notion of effectiveness introduced in \Cref{sec:dual}. Activity is portfolio-specific and, in the present numerical setting, identifies the recovery regimes attaining the RecAV@R maximum at the optimal portfolios generated by the specified asset--liability scenarios. Effectiveness, by contrast, is a global property of the recovery specification for the underlying probability model and is defined over the entire domain of financial positions. Hence, the active-set patterns in \Cref{fig:avar_allocations} characterize regime activity for the particular numerical setting considered here, but do not, by themselves, establish global effectiveness or redundancy.

\begin{figure}[t]
    \centering

    \begin{minipage}[t]{0.48\textwidth}
        \centering
        \includegraphics[width=\linewidth]
        {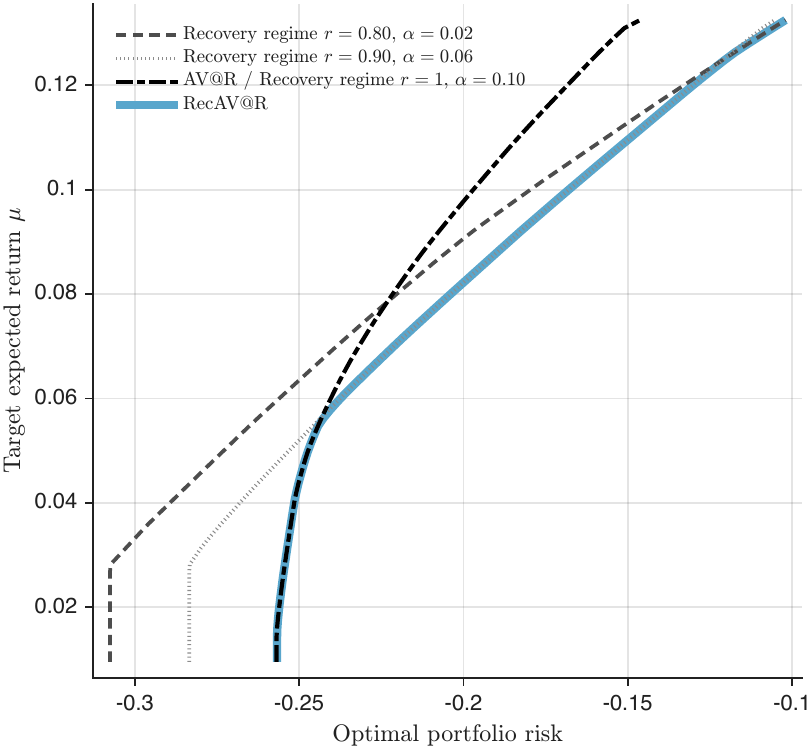}
        \caption{Efficient frontiers under the regime-specific
        AV@R requirements and the joint RecAV@R specification.}
        \label{fig:avar_frontiers}
    \end{minipage}
    \hfill
    \begin{minipage}[t]{0.48\textwidth}
        \centering
        \includegraphics[width=\linewidth]
        {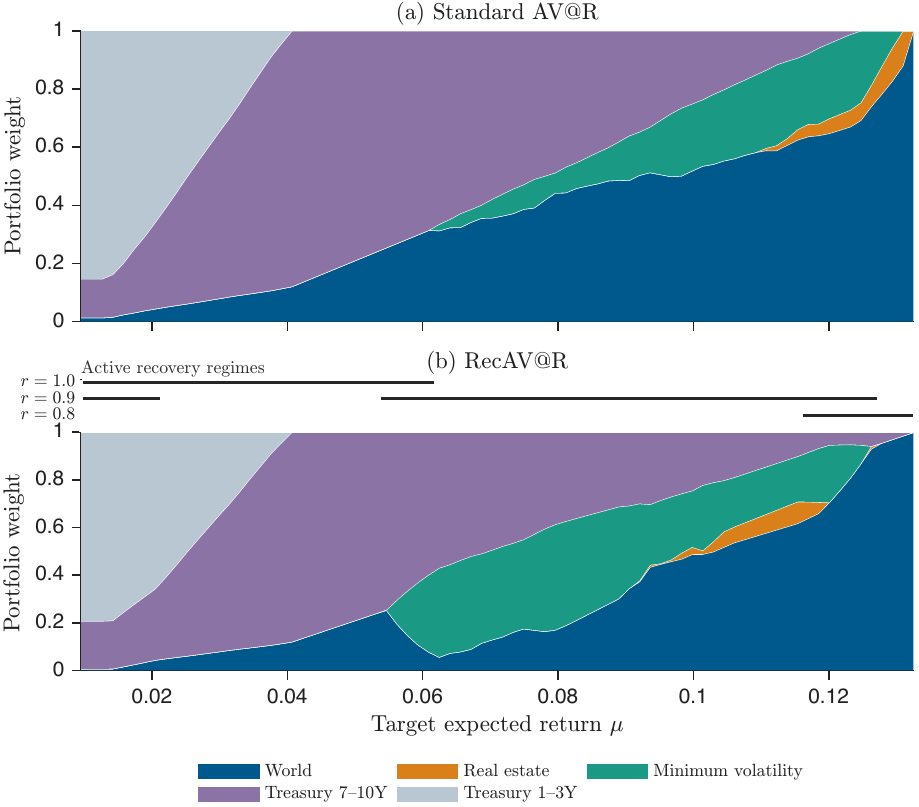}
        \caption{Optimal portfolio allocations under standard
        AV@R and RecAV@R, with active recovery regimes shown for RecAV@R.}
        \label{fig:avar_allocations}
    \end{minipage}

\end{figure}

\subsection{Approximation Accuracy and Convergence}
\label{subsec:numerical_approximation}

Having illustrated the portfolio and regime effects of recovery requirements, we next examine the accuracy of the piecewise-linear utility approximation and the convergence and stability of the resulting Recovery OCE portfolio problem. To this end, we consider a Recovery OCE generated by the exponential utility family  $u_\theta(x)=\theta\left(1-e^{-x/\theta}\right)$, $\theta>0$, introduced in \Cref{ex:standard-oce-risk-measures}, whose associated OCE is the entropic risk measure. The entropic specification serves as a smooth nonlinear OCE benchmark rather than as a calibrated model of regulatory or institutional risk measurement. The finite recovery specification consists of the four regimes
\[
    (r_j,\theta_j)
    \in
    \left\{
        \left(0.80,\tfrac{1}{21}\right),
        \left(0.85,\tfrac{1}{16}\right),
        \left(0.90,\tfrac{1}{13}\right),
        \left(1.00,\tfrac{1}{8}\right)
    \right\},
\]
and the target expected return is set to \(\mu=0.10\). Before solving the portfolio problem, we apply the structural screening criterion of \Cref{proposition:structuralScreening}. The regime $(0.85,\tfrac{1}{16})$ is universally redundant, since \(0.85=\tfrac12(0.80)+\tfrac12(0.90)\) and \(\tfrac1{16}>\tfrac12(\tfrac1{21}+\tfrac1{13})\). It can therefore be removed without changing the Recovery OCE for any asset--liability position. The remaining three regimes are not eliminated by the structural screening criterion and are therefore retained in the subsequent optimization. 

We approximate the regime-specific utility functions using the piecewise-linear constructions developed in \Cref{sec:approx}. For each recovery regime \(j\), the approximation is constructed on the portfolio residual interval
\[
    I_j^{\mathrm{port}}
    =
    [\underline z_j-\overline z_j,\,
     \overline z_j-\underline z_j].
\]
By construction, \(I_j^{\mathrm{port}}\) contains all residual values arising in the localized regime-wise OCE problems, uniformly over all feasible portfolios.

Since \(I_j^{\mathrm{port}}\) is symmetric around zero, we use an odd number \(N_{\mathrm{grid}}\) of equally spaced support points, with zero as the central support point. The secant construction uses the \(N_{\mathrm{grid}}-1\) chords between adjacent support points and yields a lower approximation \(u_{\theta_j,N_{\mathrm{grid}}}^{\mathrm{sec}}\) of \(u_{\theta_j}\) on \(I_j^{\mathrm{port}}\), whereas the lower envelope of the supporting tangents at the same \(N_{\mathrm{grid}}\) points yields an upper approximation \(u_{\theta_j,N_{\mathrm{grid}}}^{\mathrm{tan}}\). Substituting the piecewise-linear utilities into the portfolio problem yields the linear programming formulations developed in \Cref{prop:pwl-portfolio-lp}. The nonlinear Rec-entropic benchmark is computed using MATLAB's \texttt{fmincon} with the SQP algorithm and optimality and constraint tolerances of \(10^{-10}\). The piecewise-linear approximation problems are solved using MATLAB's \texttt{linprog} with the dual-simplex algorithm and its default solver tolerances. By localization of the regime-wise OCE problems and the regime-wise ordering of the utility approximations on the relevant residual intervals, their optimal values provide the certified bounds
\[
    V_{N_{\mathrm{grid}}}^{\mathrm{tan}}
    \leq
    V
    \leq
    V_{N_{\mathrm{grid}}}^{\mathrm{sec}},
\]
where \(V\) denotes the optimal value of the original nonlinear portfolio optimization problem and \(V_{N_{\mathrm{grid}}}^{\mathrm{tan}}\) and \(V_{N_{\mathrm{grid}}}^{\mathrm{sec}}\) denote the optimal values of the tangent- and secant-based linear programs. For the nonlinear Rec-entropic portfolio problem, the numerical benchmark is $V=-0.33581$. \Cref{fig:entropic_approximation} shows how the certified interval $[V_{N_{\mathrm{grid}}}^{\mathrm{tan}},V_{N_{\mathrm{grid}}}^{\mathrm{sec}}]$ contracts around this benchmark as the number of support points \(N_{\mathrm{grid}}\) increases. At the finest grid considered, with \(N_{\mathrm{grid}}=35\), the bounds are $V_{35}^{\mathrm{tan}}=-0.33867$ and $V_{35}^{\mathrm{sec}}=-0.32925,$ corresponding to an interval width of approximately \(0.00942\). As guaranteed by the one-sided approximation bounds, the nonlinear benchmark remains within the certified interval throughout the refinement sequence. The interval contracts as the grid is refined, with both bounds approaching the nonlinear benchmark. Using the finer grids, a log–log regression of the secant approximation error against the maximum mesh width yields an empirical convergence order close to two, consistent with the quadratic $O(h^2)$ rate derived above for the optimal value. The tangent approximation likewise converges toward the nonlinear benchmark in the numerical experiment, although its empirical rate is less stable across the considered grid resolutions. The corresponding approximate optimizers are also nearly optimal under the original nonlinear objective: at \(N_{\mathrm{grid}}=35\), the secant- and tangent-based solutions have optimality gaps of only \(1.64\times10^{-4}\) and \(2.74\times10^{-5}\), respectively.

\begin{figure}[t]
    \centering
    \includegraphics[width=0.85\textwidth]
        {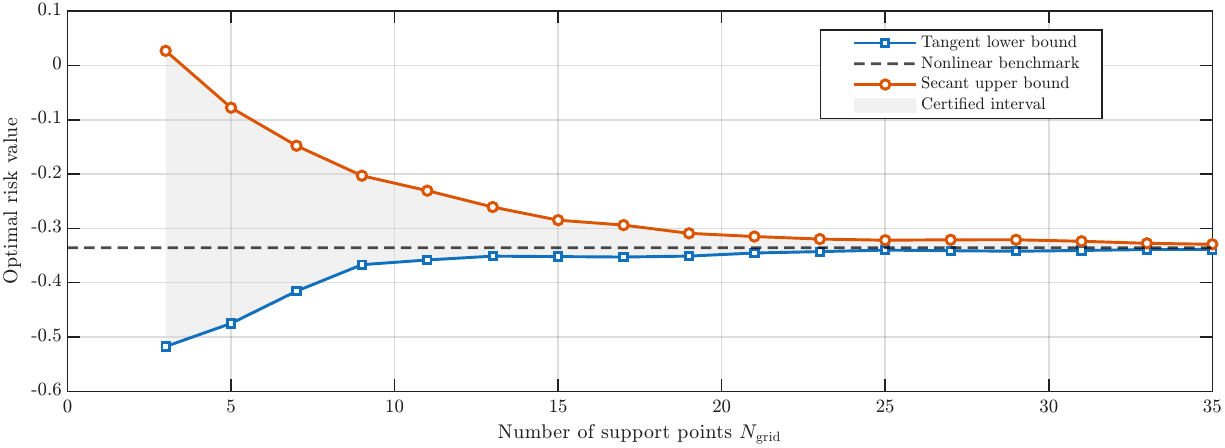}
    \caption{Certified approximation of the optimal Rec-entropic risk value. Tangent and secant utility approximations provide lower and upper bounds, respectively, while the shaded region denotes the resulting certified interval.}
    \label{fig:entropic_approximation}
\end{figure}

Finally, the regime-wise lower and upper bounds \(\underline R_{j,N_{\mathrm{grid}}}\) and \(\overline R_{j,N_{\mathrm{grid}}}\), respectively, illustrate the screening principle underlying \Cref{rem:adaptive-refinement}. At the nonlinear benchmark portfolio, the intermediate recovery regime \(r=0.9\) is active. At \(N_{\mathrm{grid}}=25\), the full-recovery regime satisfies
\[
    \overline R_{r=1,25}
    =
    -0.34185
    <
    -0.33976
    =
    \max_j \underline R_{j,25},
\]
and therefore cannot attain the maximal regime-specific risk at this portfolio. It may consequently be excluded from further refinement at this position, illustrating how certified approximation intervals can be used to avoid unnecessary regime-wise refinement. This position-specific refinement screening is distinct from the structural screening applied above: the latter removes a universally redundant recovery regime before optimization, whereas the former uses certified approximation bounds to avoid further refinement of a regime at a particular portfolio.

Overall, the numerical results illustrate both the portfolio implications and the computational tractability of Recovery OCE portfolio optimization. Recovery requirements affect portfolio composition and regime activity, while the piecewise-linear approximations provide certified bounds that tighten under refinement and yield approximate optimizers that are nearly optimal under the exact objective.

\section{Conclusions}
\label{sec:conclusions}

This paper develops a structural and computational theory for Recovery OCEs with recovery-dependent utility specifications. The main result is an exact characterization of universal redundancy for a subfamily of Cressie–Read power-divergence OCEs on atomless probability spaces. In this setting, in contrast to RecAV@R, where redundancy is governed by pairwise dominance, a recovery regime can be implied jointly by several other regimes. Universal redundancy is therefore a genuinely geometric property of the recovery levels and divergence-scale parameters, providing a direct screening criterion for recovery specifications.

As a second contribution, we develop a piecewise-linear approximation framework for nonlinear OCEs that turns finite-scenario portfolio problems into linear programs while retaining certified error bounds. Uniform utility approximation yields convergence of optimal values and stability of optimal solutions, and the numerical study illustrates both the portfolio effects of recovery requirements and the accuracy of the resulting approximations.

\section*{Acknowledgments}
ChatGPT (OpenAI; GPT-5.6 Sol) was used during the preparation of this manuscript to assist with literature research, the development and simplification of mathematical arguments and proofs, and the generation and revision of code for the numerical study and graphical presentation. All mathematical content and computational results were independently verified by the authors, and all cited works were consulted and checked by the authors. The authors assume responsibility for all content.

The authors thank Céline Duong for her work on Recovery OCE risk measures in her Master’s thesis, supervised by the authors, and for valuable discussions. We also thank Quentin Castro Niemann for helpful discussions on OCE approximations during his Master’s thesis.

\printbibliography

\appendix
\section{Additional proofs}
\label{app:proofs}

\subsection{Proof of \Cref{lemma:oce-sup-attained}}
\label{app:oce-sup-attained}

\begin{proof}
Set
\[
    v(x):=u(x)-x.
\]
Since \(1\in\partial u(0)\) and \(u(0)=0\), we have $v(x)\leq v(0)=0$ for all $x\in\mathbb R$. Moreover, \(v\) is concave. Hence, since it attains its maximum at zero, \(v\) is non-decreasing on \((-\infty,0]\) and non-increasing on \([0,\infty)\). Writing \(F(m):=m+\mathbb E[u(X-m)]\), we obtain
\(F(m)=\mathbb E[X]+\mathbb E[v(X-m)]\).

If \(m<x_{\min}\), then
\[
    X-m\geq X-x_{\min}\geq0
    \qquad\text{a.s.},
\]
and the monotonicity of \(v\) on \([0,\infty)\) yields $F(m)\leq F(x_{\min})$. Similarly, if \(m>x_{\max}\), then
\[
    X-m\leq X-x_{\max}\leq0
    \qquad\text{a.s.},
\]
and the monotonicity of \(v\) on \((-\infty,0]\) gives $F(m)\leq F(x_{\max})$. Consequently,
\[
    \sup_{m\in\mathbb R}F(m)
    =
    \sup_{m\in[x_{\min},x_{\max}]}F(m).
\]

It remains to show that the latter supremum is attained. For every
\(m\in[x_{\min},x_{\max}]\),
\[
    X-m\in
    [x_{\min}-x_{\max},\,x_{\max}-x_{\min}]
    \qquad\text{a.s.}
\]
By assumption, \(u\) is finite on this compact interval. Since u is closed and concave, it is continuous and hence, by the Heine–Cantor theorem, uniformly continuous. For \(m,m'\in[x_{\min},x_{\max}]\),
\[
    |F(m)-F(m')|
    \leq |m-m'|
    + \mathbb E\!\left[
        |u(X-m)-u(X-m')|
    \right].
\]
By uniform continuity of \(u\), the expectation on the right-hand side
converges to zero as \(m'\to m\), while \(|m-m'|\to0\). Thus, \(F\) is
continuous on \([x_{\min},x_{\max}]\). Since this interval is compact, \(F\)
attains its maximum there. Therefore,
\[
    S_u(X)
    =
    \max_{m\in[x_{\min},x_{\max}]}
    \left\{
        m+\mathbb E[u(X-m)]
    \right\}.
\]
\end{proof}

\subsection{Proof of \Cref{prop:properties-recrho}}
\label{app:proof-properties-recrho}
\begin{proof}
Cash invariance, monotonicity, and joint convexity follow directly from the corresponding properties of the underlying OCE risk measures and \Cref{def:recoce}. Now let \(Y\in L^\infty_+\). For every
\(\lambda\in[0,1]\),
\[
    \rho_{u_{\gamma(\lambda)}}((1-\lambda)Y)\leq 0,
\]
with equality for \(\lambda=1\), which yields
\(\operatorname{Rec}\rho_{\mathfrak u,\gamma}(0,Y)=0\). Moreover,
\[
    \rho_{u_{\gamma(\lambda)}}\bigl(X+(1-\lambda)Y\bigr)
    \leq \rho_{u_{\gamma(\lambda)}}(X)
    \leq \rho_{u_{\gamma(0)}}(X),
\]
where the second inequality follows from the monotonicity of \(\gamma\)
and the ordering of the OCE risk measures. Taking the supremum over
\(\lambda\in[0,1]\) proves the upper bound.
\end{proof}

\subsection{Proof of \Cref{cor:recavar-redundancy}}
\label{app:recavar-redundancy}

\begin{proof}
Write $R_{k,Y}(X) := \operatorname{AV@R}_{\alpha_k} \bigl(X+(1-r_k)Y\bigr)$. Suppose first that, for some \(k\neq j\),
\[
    r_k\geq r_j,
    \qquad
    \alpha_k\leq\alpha_j.
\]
For every \(X\in L^\infty\) and \(Y\in L^\infty_+\),
\[
    X+(1-r_k)Y
    \leq
    X+(1-r_j)Y.
\]

Moreover, \(\alpha_k\leq\alpha_j\) implies
\(\operatorname{AV@R}_{\alpha_k}(W)
\geq\operatorname{AV@R}_{\alpha_j}(W)\) for every \(W\in L^\infty\).
Together with the preceding inequality, this yields
\(R_{k,Y}(X)\geq R_{j,Y}(X)\), and hence regime \(j\) is universally
redundant.

Conversely, suppose that no \(k\neq j\) satisfies both
\(r_k\geq r_j\) and \(\alpha_k\leq\alpha_j\). Let
\(A\in\mathcal F\) satisfy \(\mathbb P(A)=\alpha_j\). Using the dual representation of AV@R, for \(a>0\),
\begin{equation}\label{eq:proofA4}
\operatorname{AV@R}_\alpha(-a\mathbf 1_A)
=
a\sup_{\substack{Z\geq0,\ \mathbb E[Z]=1\\ Z\leq1/\alpha}}
\mathbb E[Z\mathbf 1_A]
=
a\min\left\{1,\frac{\mathbb P(A)}{\alpha}\right\}.
\end{equation}
Set \(s_k:=1-r_k\) and fix \(y>0\). For \(x>y\max_k s_k\), define \(X=-x\mathbf 1_A\) and \(Y=y\mathbf 1_A\). Then, by \eqref{eq:proofA4},
\[
    R_{k,Y}(X)
    =
    (x-s_ky)
    \min\left\{1,\frac{\alpha_j}{\alpha_k}\right\},
    \qquad
    R_{j,Y}(X)=x-s_jy.
\]
If \(\alpha_k\leq\alpha_j\), the assumption implies \(r_k<r_j\), hence
\(s_k>s_j\), and therefore
\[
    R_{k,Y}(X)=x-s_ky<x-s_jy=R_{j,Y}(X).
\]
If \(\alpha_k>\alpha_j\), set
\(q_k:=\alpha_j/\alpha_k\in(0,1)\). Then
\[
\begin{aligned}
    R_{j,Y}(X)-R_{k,Y}(X)
    &=
    (1-q_k)x-y(s_j-q_ks_k),
\end{aligned}
\]
which is strictly positive for all sufficiently large \(x\). Since there
are only finitely many regimes, \(x\) can be chosen so that this holds
simultaneously for every \(k\neq j\). Consequently,
\[
    R_{j,Y}(X)
    >
    \max_{k\neq j}R_{k,Y}(X),
\]
so regime \(j\) is not universally redundant. This proves the converse by
contraposition.

If the probability space is atomless, an event \(A\) with
\(\mathbb P(A)=\alpha_j\) exists for every \(\alpha_j\in(0,1)\), which
proves the final assertion.
\end{proof}

\subsection{Proof of \Cref{theorem:powerUniversalRedundancy}}
\label{app:powerUniversalRedundancy}
\begin{proof}

The equivalence of \textup{(ii)} and \textup{(iii)} follows directly from
the definition \(q_k=(\theta_k,1-r_k)\). Indeed,
\(q_j\in\operatorname{conv}\{q_k:k\neq j\}+\mathbb R_+^2\) if and only if
there exist weights \(\xi_k\geq0\), \(k\neq j\), with
\(\sum_{k\neq j}\xi_k=1\) such that
\[
\theta_j\geq\sum_{k\neq j}\xi_k\theta_k,
\qquad
1-r_j\geq\sum_{k\neq j}\xi_k(1-r_k),
\]
where the second inequality is equivalent to
\(r_j\leq\sum_{k\neq j}\xi_k r_k\).

The implication \textup{(iii)}$\Rightarrow$\textup{(i)} follows from \Cref{proposition:structuralScreening}. Since \(\phi_{\theta_k,p}=\theta_k\phi_p\) and \(\phi_p\geq0\), condition \textup{(iii)} implies
\[
\phi_{\theta_j,p}(z)
\geq
\sum_{k\neq j}\xi_k\phi_{\theta_k,p}(z),
\qquad z\geq0.
\]
Together with $r_j \leq \sum_{k\neq j}\xi_k r_k$, \Cref{proposition:structuralScreening} yields \textup{(i)}.

It remains to prove
\textup{(i)}$\Rightarrow$\textup{(ii)}. We argue by contraposition. To this end, set \(s_k:=1-r_k\) and \(C_{-j}:=\operatorname{conv}\{q_k:k\neq j\}+\mathbb R_+^2\). Suppose that \(q_j\notin C_{-j}\). Since \(C_{-j}\) is a nonempty closed convex subset of \(\mathbb R^2\), strict separation yields a nonzero vector \((h,a)\in\mathbb R^2\) such that
\[
h\theta_j+as_j
<
\inf_{q\in C_{-j}}\langle(h,a),q\rangle.
\]
Because \(C_{-j}\) is upper closed with respect to the componentwise order,
both components of the separating vector must be nonnegative; otherwise
the infimum on the right-hand side would equal \(-\infty\). Hence
\((h,a)\in\mathbb R_+^2\setminus\{(0,0)\}\). Moreover,
\[
\inf_{q\in C_{-j}}\langle(h,a),q\rangle
=
\min_{k\neq j}\{h\theta_k+as_k\},
\]
and therefore
\begin{equation}
h\theta_j+as_j
<
h\theta_k+as_k,
\qquad k\neq j.
\tag{*}
\end{equation}

We construct an asset--liability position for which regime \(j\) is strictly
active, distinguishing the cases \(h=0\) and \(h>0\).

Suppose first that \(h=0\). Then \(a>0\), and \textup{(*)} implies
\(s_j<s_k\) for every \(k\neq j\). Choose any \(c>0\) and set \(X=0\)
and \(Y=c\). By normalization and cash invariance of the OCE risk measures,
\[
R_{k,Y}(X)
=
\rho_{u_{\theta_k,p}}(s_kc)
=
-s_kc.
\]
Consequently,
\[
R_{j,Y}(X)
=
-s_jc
>
-s_kc
=
R_{k,Y}(X),
\qquad k\neq j.
\]
Thus regime \(j\) is uniquely active and therefore not universally
redundant.

Suppose now that $h>0$. We first consider $p>1$. For sufficiently large
$t$, define
\[
    \varepsilon_t
    :=
    \bigl(p(p-1)ht\bigr)^{-1/(p-1)}
    \in(0,1).
\]
By atomlessness, there exists an event $A_t\in\mathcal F$ satisfying $\mathbb P(A_t)=\varepsilon_t$.

Choose $x>0$ sufficiently large that
\begin{equation}
    x-as_k>ph\theta_k,
    \qquad k=1,\ldots,l+1,
    \tag{**}
\end{equation}
and define
\[
    X_t:=-tx\,\mathbf 1_{A_t},
    \qquad
    Y_t:=ta\,\mathbf 1_{A_t}.
\]
Then \(X_t+s_kY_t=-t(x-as_k)\mathbf 1_{A_t}\). For brevity, write
\(d_k:=x-as_k.\)

Using the dual representation of the power-divergence OCE gives
\[
    R_{k,Y_t}(X_t)
    =
    \sup_{\substack{Z\geq0\\ \mathbb E[Z]=1}}
    \left\{
        t d_k\,\mathbb E[Z\mathbf 1_{A_t}]
        -
        \theta_k\mathbb E[\phi_p(Z)]
    \right\}.
\]
For a prescribed value
\(q:=\mathbb E[Z\mathbf 1_{A_t}]\in[0,1]\), we have
\[
\mathbb E[Z\mid A_t]
=
\frac{q}{\varepsilon_t},
\qquad
\mathbb E[Z\mid A_t^c]
=
\frac{1-q}{1-\varepsilon_t},
\]
since \(\mathbb E[Z]=1\). Applying Jensen's inequality conditionally on
$A_t$ and $A_t^c$ therefore gives
\[
    \mathbb E[\phi_p(Z)]
    \geq
    \varepsilon_t
    \phi_p\left(\frac{q}{\varepsilon_t}\right)
    +
    (1-\varepsilon_t)
    \phi_p\left(\frac{1-q}{1-\varepsilon_t}\right).
\]
This lower bound is attained by
\[
    Z_{t,q}
    =
    \frac{q}{\varepsilon_t}\mathbf 1_{A_t}
    +
    \frac{1-q}{1-\varepsilon_t}\mathbf 1_{A_t^c},
\]
which satisfies $Z_{t,q}\geq0$, $\mathbb E[Z_{t,q}]=1$, and
$\mathbb E[Z_{t,q}\mathbf 1_{A_t}]=q$. Hence, for each fixed $q$,
$Z_{t,q}$ minimizes the divergence penalty among all admissible densities
with $\mathbb E[Z\mathbf 1_{A_t}]=q$. Consequently,
\begin{equation}
    R_{k,Y_t}(X_t)
    =
    \sup_{q\in[0,1]}
    \left\{
        t d_kq-\theta_kD_{p,t}(q)
    \right\},
    \tag{***}
\end{equation}
where
\[
\begin{aligned}
    D_{p,t}(q)
    &:=
    \mathbb E[\phi_p(Z_{t,q})]
    \\
    &=
    \frac{
        q^p\varepsilon_t^{\,1-p}
        +(1-q)^p(1-\varepsilon_t)^{\,1-p}
        -1
    }{p(p-1)}.
\end{aligned}
\]
By the definition of $\varepsilon_t$,
\[
    \frac{\varepsilon_t^{\,1-p}}{p(p-1)t}=h.
\]
Moreover,
\[
    \sup_{q\in[0,1]}
    \left|
        \frac{
            (1-q)^p(1-\varepsilon_t)^{\,1-p}-1
        }{p(p-1)t}
    \right|
    \longrightarrow0.
\]
It follows that \(D_{p,t}(q)/t\to hq^p\) uniformly for
\(q\in[0,1]\). Dividing \textup{(***)} by $t$ and using
uniform convergence therefore yields
\begin{equation}
    \lim_{t\to\infty}
    \frac{1}{t}R_{k,Y_t}(X_t)
    =
    \sup_{q\in[0,1]}
    \{d_kq-h\theta_kq^p\}.
    \tag{****}
\end{equation}
By \textup{(**)}, \(d_k-ph\theta_k>0\). Thus, for every $q\in[0,1]$,
\[
    \frac{d}{dq}
    \bigl(d_kq-h\theta_kq^p\bigr)
    =
    d_k-ph\theta_kq^{p-1}
    >0.
\]
The supremum in \textup{(****)} is therefore attained at $q=1$, and
\begin{equation}
    \lim_{t\to\infty}
    \frac{1}{t}R_{k,Y_t}(X_t)
    =
    x-as_k-h\theta_k.
    \tag{*****}
\end{equation}
By \textup{(*)},
\[
    x-as_j-h\theta_j
    >
    x-as_k-h\theta_k,
    \qquad k\neq j.
\]
Since there are only finitely many regimes, \textup{(*****)} implies that,
for all sufficiently large $t$,
\[
    R_{j,Y_t}(X_t)
    >
    \max_{k\neq j}R_{k,Y_t}(X_t).
\]
Hence regime $j$ is uniquely active and therefore not universally
redundant.

It remains to treat $p=1$ when $h>0$. In this case,
\(\phi_{\theta_k,1}(z)=\theta_k(z\log z-z+1),\)
so that the associated OCE risk measure is entropic:
\[
    \rho_{\theta_k,1}(W)
    =
    \theta_k
    \log\mathbb E
    \left[
        \exp\left(-\frac{W}{\theta_k}\right)
    \right].
\]
By atomlessness, for every $t>0$ there exists an event $A_t\in\mathcal F$
such that \(\mathbb P(A_t)=e^{-th}. \)
Choose \(x\) such that
\(x>as_k+h\theta_k\) for all \(k=1,\ldots,l+1\), and define
\[
X_t:=-tx\,\mathbf 1_{A_t},
\qquad
Y_t:=ta\,\mathbf 1_{A_t}.
\]
Then
\[
\begin{aligned}
    R_{k,Y_t}(X_t)
    &=
    \theta_k
    \log
    \left(
        1-e^{-th}
        +
        \exp\left(
            \frac{t[x-as_k-h\theta_k]}{\theta_k}
        \right)
    \right),
\end{aligned}
\]
and hence
\[
    \lim_{t\to\infty}
    \frac{1}{t}R_{k,Y_t}(X_t)
    =
    x-as_k-h\theta_k.
\]
Again, \textup{(*)} implies
\(x-as_j-h\theta_j>x-as_k-h\theta_k\) for every \(k\neq j\).
Thus, for all sufficiently large \(t\),
\[
R_{j,Y_t}(X_t)
>
\max_{k\neq j}R_{k,Y_t}(X_t),
\]
so regime \(j\) is not universally redundant.

We have therefore shown in all cases that
$q_j\notin C_{-j}$ implies that regime $j$ is not universally redundant.
This proves \textup{(i)}$\Rightarrow$\textup{(ii)} by contraposition and
completes the proof.
\end{proof}

\subsection{Proof of \Cref{theorem:portfolio-stability}}
\label{app:portfolio-stability}
\begin{proof}
For \(\mathbf w\in\Delta_K(\mu)\) and \(j=1,\ldots,l+1\), let
\(I_j(\mathbf w)\) denote the position-specific residual interval associated
with \(\mathbf Z_j(\mathbf w)\). By construction,
\(I_j(\mathbf w)\subseteq I_j^{\mathrm{port}}\). Hence,
\Cref{lemma:recoce-error-propagation} yields
\[
    |F_n(\mathbf w)-F(\mathbf w)|\leq\varepsilon_n,
    \qquad \mathbf w\in\Delta_K(\mu).
\]
Taking the supremum over \(\Delta_K(\mu)\) proves~\textup{(i)}.

Every finite-valued monetary risk measure on \(L^\infty\) is
\(1\)-Lipschitz with respect to the \(L^\infty\)-norm. Since
\(\mathbf A(\mathbf w)\) depends continuously on \(\mathbf w\), the
regime-wise OCEs, and hence their finite maxima \(F\) and \(F_n\), are
continuous on the compact set \(\Delta_K(\mu)\). They therefore attain
their minima. Part~\textup{(i)} implies
\(V_n\leq V+\varepsilon_n\) and \(V\leq V_n+\varepsilon_n\), so that
\(|V_n-V|\leq\varepsilon_n\) and \(V_n\to V\). This proves~\textup{(ii)}.

Let
\(\mathbf w_n\in
\operatorname*{arg\,min}_{\mathbf w\in\Delta_K(\mu)}F_n(\mathbf w)\),
and let \(\overline{\mathbf w}\) be any accumulation point of
\((\mathbf w_n)\). By compactness, such accumulation points exist, and
there is a subsequence
\(\mathbf w_{n_k}\to\overline{\mathbf w}\). By~\textup{(i)} and optimality,
\[
    F(\mathbf w_{n_k})
    \leq V_{n_k}+\varepsilon_{n_k}.
\]
Using continuity of \(F\), part~\textup{(ii)}, and
\(\varepsilon_{n_k}\to0\) gives
\(F(\overline{\mathbf w})\leq V\). Since \(V\) is the minimum of \(F\),
the reverse inequality holds, and hence
\(F(\overline{\mathbf w})=V\). Thus every accumulation point of
\((\mathbf w_n)\) is an exact minimizer. If the exact minimizer
\(\mathbf w^\star\) is unique, then \(\mathbf w^\star\) is the only
accumulation point of \((\mathbf w_n)\); compactness therefore implies
\(\mathbf w_n\to\mathbf w^\star\). This proves~\textup{(iii)}.
\end{proof}

\section{An Example of Strict Convexification}
\label{app:strict-convexification}

Consider a two-state probability space with
$\mathbb P=(1/2,1/2)$ and let $Y=(0,1)$. For $q\in[0,1]$, define
\[
    Q_q=(q,1-q),
    \qquad
    Z_q:=\frac{dQ_q}{d\mathbb P}=(2q,2(1-q)).
\]
Then
\[
    \mathbb E[Z_qY]
    =
    \mathbb E_{Q_q}[Y]
    =
    1-q.
\]

Consider two recovery regimes with
\[
    0<\theta_1<\theta_2,
    \qquad
    r_1<r_2,
\]
and let their OCEs be generated by the entropic divergence functions
\[
    \phi_{\alpha_j}(z)=\theta_j(z\log z-z+1),
    \qquad j=1,2.
\]
By the general form of the regime-wise minimal penalties, using the convention $0\log 0:=0$,
\[
\begin{aligned}
g_{j,Y}(Z_q)
&=
\mathbb E[\phi_{\alpha_j}(Z_q)]
+
(1-r_j)\mathbb E[Z_qY] \\
&=
\theta_j
\bigl[
q\log(2q)+(1-q)\log(2(1-q))
\bigr]
+
(1-r_j)(1-q),
\qquad j=1,2.
\end{aligned}
\]
The two terms in these penalties induce opposite orderings across the
regimes. Indeed, $\theta_1<\theta_2$ makes the divergence term smaller
under the first regime, whereas $r_1<r_2$ makes its recovery term larger.
This trade-off leads to a crossing of the two penalty functions. To see
this, define
\[
\begin{aligned}
h(q):=g_{1,Y}(Z_q)-g_{2,Y}(Z_q)
&=
-(\theta_2-\theta_1)
\bigl[
q\log(2q)+(1-q)\log(2(1-q))
\bigr] \\
&\quad +(r_2-r_1)(1-q).
\end{aligned}
\]
Since
\[
h(1/2)=\frac{r_2-r_1}{2}>0,
\qquad
h(1)=-(\theta_2-\theta_1)\log 2<0,
\]
the two penalties cross at some $q^*\in(1/2,1)$. The crossing is unique,
since
\[
h'(q)
=
-(\theta_2-\theta_1)\log\frac{q}{1-q}
-(r_2-r_1)
<0,
\qquad q\in(1/2,1).
\]
Consequently,
\[
\beta_Y(Z_q)
=
\min\{g_{1,Y}(Z_q),g_{2,Y}(Z_q)\}
\]
switches from $g_{2,Y}(Z_q)$ to $g_{1,Y}(Z_q)$ at $q^*$. Moreover,
\[
\frac{d}{dq}g_{1,Y}(Z_q)\Big|_{q=q^*}
<
\frac{d}{dq}g_{2,Y}(Z_q)\Big|_{q=q^*},
\]
because $h'(q^*)<0$. Thus, $q\mapsto\beta_Y(Z_q)$ has a downward jump in slope at $q^*$ and is therefore not convex. Since $q\mapsto Z_q$ is affine, $\beta_Y$ itself cannot be convex. Consequently, the convex envelope of $\beta_Y$ is strictly smaller than $\beta_Y$ at some point. Hence, the convexification in the minimal penalty representation can be strict even for two entropic OCE regimes.

\end{document}